\def\ve{\varepsilon}
\newtheorem{theorem}{Theorem}
\newtheorem{corollary}{Corollary}
\newtheorem{remark}{Remark}
\begin{document}

\title{\bf A numerical algorithm to computationally solve the Hemker problem using Shishkin meshes}
\author{A. F. Hegarty 
\thanks{Department of Mathematics and Statistics, University of Limerick, Ireland.} \and E.\ O'Riordan \thanks{ School of Mathematical Sciences, Dublin City University, Dublin 9, Ireland.} }

\maketitle

{{\centerline{\it Dedicated to the memory of P. W. Hemker, who inspired this research work}}}
\begin{abstract}

 A numerical algorithm is presented to solve  a benchmark   problem proposed by Hemker\cite{hemker}. The algorithm incorporates asymptotic information into the design of appropriate piecewise-uniform Shishkin meshes. Moreover, different co-ordinate systems are utilized due to the different geometries and associated layer structures that are involved in this problem. Numerical results are presented to demonstrate the effectiveness of the proposed numerical algorithm. 
\bigskip

\noindent{\bf Keywords:} Singularly perturbed,  Shishkin mesh, Hemker problem.

\noindent {\bf AMS subject classifications:}    65N12, 65N15, 65N06.
\end{abstract}

\section{Introduction}

In \cite{hemker} Hemker proposed a model test problem in two space dimensions, defined on the unbounded domain $ \mathbb{R}^2 \setminus \{ x^2+y^2 \leq 1\} $, which is exterior to the unit circle. The problem involves the simple constant coefficient linear singularly perturbed convection-diffusion equation
\begin{equation}\label{Piet's-problem}
-\ve { \Delta }  u + u_x=0, \quad \hbox{for} \quad x^2+y^2 >1;  
\end{equation}
with the boundary conditions $u(x,y) =1$,  if $x^2+y^2 = 1$ and $ u(x,y) \rightarrow 0$ as $x^2+y^2 \rightarrow \infty$. 
An exponential boundary layer and characteristic interior layers appear in the solution of this problem.  In neighbourhoods of the two points $(0,\pm1)$, where the characteristic lines $y=C$  of the reduced problem 
($ v_x=0$) are tangential to the circle, there are transition regions, where the steep gradients in the solution migrate from the exponential boundary layers (located on the left side of the unit disk) to the characteristic internal layers  
which are emanating from the characteristic points $(0,\pm1)$ (see Figure \ref{Fig.0}). To design a numerical method, which produces stable and accurate approximations over the entire domain, for arbitrary  
small values of the perturbation parameter $\ve$, is seen in \cite{john1, bosco, havik, hemker, volker1}  as a reasonable challenge to the numerical analysis community. Here we concentrate on the layers that   appear in the vicinity of the circle by considering the problem restricted to a finite domain.  Hence, we do not address the potential merging of the two characteristic layers, which can occur at a sufficentlly large distance of $O(1/{\ve})$ \cite[pg.190]{{lagerstrom}} downwind (i.e.,  $x >> 1$)  of the circle.

In several publications (e.g.\cite{john1, gabriel}) the Hemker problem is  used to test the stability of numerical algorithms  designed for a wide class of convection-dominated convection-diffusion problems, as classical finite element methods produce spurious oscillations for this type of singularly perturbed problems. 
In \cite{tailor1}, a stable numerical method was constructed  on a quasi-uniform mesh for problem (\ref{Piet's-problem}), but there was a limited discussion of the accuracy of the numerical approximations.
In this paper, we guarantee parameter-uniform stability of the discrete  operators by using simple upwinded finite difference operators.
 Our main focus is on the design of an appropriate layer-adapted mesh, so that we guarantee that a significant proportion of the mesh  points lie within the layers.
In Theorem \ref{main}, several pointwise bounds on the solution of the continuous problem are established, from which the location and width of any layers are identified. 
 Asymptotic analysis \cite{eckhaus, hemker, ilin, lagerstrom, waechter} has also been used to determine the location and scale of all the layers that can appear in the solution of problem (\ref{Piet's-problem}).  

The numerical algorithm constructed in this paper is composed of several different Shishkin meshes \cite{gis1} defined across different co-ordinate systems aligned to  three overlapping subdomains. As we lack sufficient theoretical information about  the localized character of the partial derivatives of the continuous solution, we have no  meaningful pointwise bounds for the approximation error associated with any computational algorithm applied to  the Hemker problem. Here, we test for convergence of the numerical approximations using the double-mesh principle \cite{fhmos} and, more importantly, to identify when any numerical method fails to be convergent.
 We emphasize that we shall estimate the global pointwise convergence of the interpolated computed approximations across the entire domain. 
Numerical results with a preliminary version of this algorithm were reported in \cite{glasgow-BAIL}. 

The Shishkin mesh \cite{fhmos, mos} is a  central component of the algorithm. The simplicity of this mesh is one of the key attributes of this particular layer-adapted mesh, which allows easy extensions to more complicated problems. Shishkin meshes have the additional property that, if one has established  parameter-uniform nodal accuracy \cite{fhmos} in a subdomain, then (for  problems with regular exponential boundary layers \cite{hemkera} and characteristic boundary layers \cite{hemkerb}) this nodal accuracy extends to global accuracy across the subdomain using basic bi-linear interpolation.
This interpolation  feature of the mesh permits us employ overlapping subdomains, with different co-ordinate systems aligned to the local geometry of the layers, and to subsequently test computationally for global accuracy across the entire domain.

In \S 2, we identify bounds on the solution of the Hemker problem restricted to a bounded domain. 
In \S 3, we discuss how we computationally estimate the order of parameter-uniform convergence for any numerical method. 
 In \S 4, we construct and describe the numerical algorithm, which involves four distinct stages. The first two stages generate an initial approximation which has defects only near the characteristic points. The third and fourth stage correct this initial approximation.   In \S 5, we present numerical results to illustrate the performance of the final algorithm. The numerical results indicate that this new algorithm is generating numerical approximations which are converging, over an extensive range of the singular perturbation parameter,  to the solution of  a bounded-domain version of the original Hemker problem.   

{\bf Notation}: We will employ three distinct co-ordinate systems in this paper. A Cartesian co-ordinate system $(x,y)$, a polar co-ordinate system $(r,\theta)$ and a particular parabolic co-ordinate system  $(s,t)$. In each co-ordinate system, we adopt the following notation for functions:
 \[
 f(x,y)=\hat f(s,t) =\tilde f(r,\theta). \]
We use these co-ordinate systems to solve various sub-problems on an annulus $ A$, a rectangle $S$ and planar regions $ Q^+,  Q^-$. The numerical solution is determined using four sub-components $U_A, U_B, U_C$ and $U_D$; where $U_A$ is defined over  the annulus $A$, $U_C$  is defined over the planar region $ Q^+ \cup Q^-$ and $U_B, U_D$ are defined over the rectangular region $S$. In addition, the algorithm produces an initial global approximation $\bar U^N_1$, which is shown to lose accuracy in some of the layers. This initial approximation is subsequently corrected to produce a globally pointwise accurate approximation $\bar U^N_2$. Throughout the paper, $\Vert \cdot \Vert _D$ denotes the  supremum or maximum norm measured over the domain $D$.

\section{The continuous problem}

In this paper, we confine our discussion to the Hemker problem (\ref{Piet's-problem}) posed on a bounded domain of dimension $O(R^2)$.
Consider the singularly perturbed elliptic problem: Find $ u (x,y)$ such that 
\begin{subequations}\label{cont-prob}
\begin{eqnarray} 
 L u:=- \ve { \Delta} u +u_x =0, \quad (x,y) \in   \Omega ; \\
u=0, \  (x,y) \in \partial \Omega _O; \quad   u_x =0, \  (x,y) \in \partial \Omega _N;\quad
 u=1, \  (x,y) \in \partial \Omega _I; 
\end{eqnarray}
where the bounded domain $\Omega$ and the boundaries $\partial \Omega _O, \partial \Omega _N, \partial \Omega _I$ are defined to be
\begin{eqnarray}
\Omega := \{(x,y) \vert 1 < x^2 +y^2 <  R^2, x \leq 0\} \cup \{(x,y) \vert -R  < y <R, 0 < x < R, 1 < x^2 +y^2 \},
\\
\partial \Omega _N:= \{ (R,y) |-R < y < R \}, \quad
  \partial \Omega _I:= \{(x,y) \vert x^2 +y^2 =1 \}; \\
\partial \Omega _O:= \{ (x,y) \vert x^2 +y^2 =  R^2, x \leq 0\} \cup \{(x,y) \vert y =\pm R, 0 \leq x \leq R \}.
\end{eqnarray}
\end{subequations}
A sample computed solution (using the algorithm (\ref{corrected-approx})) is displayed in Figure \ref{Fig.0}, which illustrates the location of the layers that can appear in the solution. 
In all of our numerical experiments, we have simply  taken $R=4$. 
\begin{figure}  \center{\includegraphics[scale=0.15, angle=0]{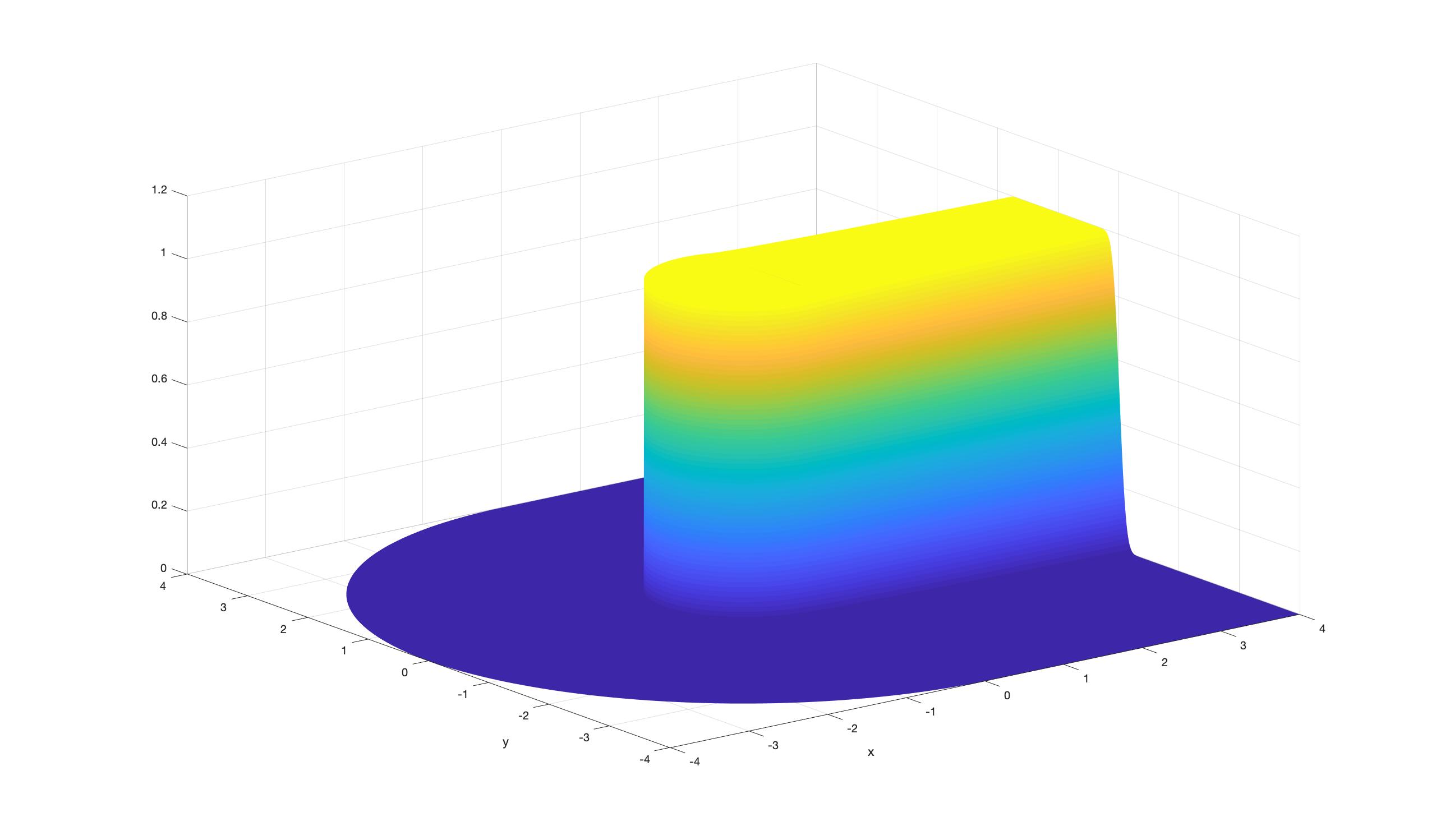}}
\caption{The computed solution $\bar U_2^{1024} $ of the Hemker problem (\ref{cont-prob}) generated by  numerical method (\ref{corrected-approx}) for $\ve =2^{-10}$}
\label{Fig.0}  
\end{figure}

We have a minimum principle associated with this problem.
\begin{theorem}\cite[pg. 61]{prot}\label{compar1}
For any $ w \in C^0(\bar D) \cap C^2(D), D \subset  \Omega$,  if $ L w (x,y) \geq 0, (x,y) \in D$, then
$\min _{\bar D}  w \geq \min _{\partial D} w$. 
\end{theorem}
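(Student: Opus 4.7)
The plan is to establish the minimum principle by reducing to the strict case via a small exponential perturbation, since $L=-\ve\triangle+\partial_x$ has no zero-order term and so the bare inequality $Lw\ge 0$ does not immediately forbid an interior minimum. First, I would look for a smooth positive auxiliary function $\phi$ on $\bar D$ satisfying $L\phi>0$ pointwise. A natural candidate, given that the only first-order term is $\partial_x$, is $\phi(x,y)=e^{\beta x}$ for a parameter $\beta>0$ to be chosen. Then $L\phi=\bigl(\beta-\ve\beta^{2}\bigr)e^{\beta x}=\beta(1-\ve\beta)e^{\beta x}$, which is strictly positive on $\bar D$ as soon as $0<\beta<1/\ve$; any such fixed $\beta$ works, and $\phi$ is bounded above and below by positive constants on the bounded set $\bar D$.

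Next, for arbitrary $\delta>0$ I would define the perturbed function
\[
v_\delta := w+\delta\,\phi\in C^{0}(\bar D)\cap C^{2}(D),
\]
so that $Lv_\delta=Lw+\delta L\phi>0$ strictly throughout $D$. The core step is then a classical interior-extremum argument: if $v_\delta$ attained its minimum over $\bar D$ at some interior point $(x_0,y_0)\in D$, the first-derivative test would give $(v_\delta)_x(x_0,y_0)=0$ and the second-derivative test would give $\triangle v_\delta(x_0,y_0)\ge 0$, whence
\[
Lv_\delta(x_0,y_0)=-\ve\,\triangle v_\delta(x_0,y_0)+(v_\delta)_x(x_0,y_0)\le 0,
\]
contradicting $Lv_\delta>0$. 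Therefore $\min_{\bar D} v_\delta=\min_{\partial D} v_\delta$.

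Finally, I would pass to the limit $\delta\to 0^{+}$. Since $\phi$ is bounded on $\bar D$, the functions $v_\delta$ converge to $w$ uniformly on $\bar D$, and taking the minimum is continuous in the uniform topology, so $\min_{\bar D} w=\lim_{\delta\to 0^+}\min_{\bar D} v_\delta=\lim_{\delta\to 0^+}\min_{\partial D} v_\delta=\min_{\partial D} w$, which actually yields the stronger equality; discarding the upper bound gives the stated inequality $\min_{\bar D} w\ge \min_{\partial D} w$. The only real obstacle is producing the auxiliary barrier $\phi$ with $L\phi>0$ in the absence of a zero-order term; once $\phi=e^{\beta x}$ is in hand the rest is routine. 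A minor technical point is that $D$ need not be the full domain $\Omega$, but the argument only uses boundedness of $D$ and the regularity assumptions already in the hypothesis, so it applies verbatim to any such subdomain.
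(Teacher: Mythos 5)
Your proof is correct, but it proceeds by a different (equally classical) mechanism than the paper. You perturb additively: you add $\delta\phi$ with $\phi=e^{\beta x}$, $0<\beta<1/\ve$, so that $Lv_\delta>0$ strictly, rule out an interior minimum of $v_\delta$ by the first- and second-derivative tests (the Hessian being positive semidefinite forces $\triangle v_\delta\ge 0$, hence $Lv_\delta\le 0$ at such a point), and then let $\delta\to 0^+$ using uniform convergence on the bounded set $\bar D$. The paper instead transforms multiplicatively, setting $v:=e^{-x/(2\ve)}w$ so that $Lw=e^{x/(2\ve)}\bigl(-\ve\triangle v+\tfrac{1}{4\ve}v\bigr)$; this trades the first-order term for a strictly positive zero-order coefficient, after which the standard minimum principle for such operators applies directly, with no limiting argument. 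Your route is more self-contained (it does not invoke the minimum principle for operators with a zero-order term as a black box, and it avoids the slightly delicate point that an interior minimum of $w$ is not literally an interior minimum of $v$ after the change of variable); the paper's route is shorter and reuses the exact exponential factor $e^{x/(2\ve)}$ that recurs throughout the singular-perturbation analysis. Both hinge on the same structural fact that $e^{cx}$ is a subsolution-generating function for $L=-\ve\triangle+\partial_x$, and both deliver the stated inequality (indeed the equality $\min_{\bar D}w=\min_{\partial D}w$, as you note).
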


\begin{proof}
 Consider the special case of $\min _{\partial D} w=0$. Assume $w<0$ at some internal point. Then the function
$ v := e^{-\frac{x}{2\ve} } w$ is negative at some interior point. However, then 
$ L  w = e^{\frac{x}{2\ve} }(-\ve { \Delta} v +\frac{1}{4\ve}  v) < 0$ at the point where $v$ takes its minimum value. This is a contradiction.
Complete the proof by considering the function $w_1:=w - \min _{\partial D} w$.
\end{proof}
Thus, we have a comparison principle. 
\begin{corollary} If $ w ,v \in C^0(\bar D) \cap C^2(D), D \subset  \Omega$ are such that  $ L  w (x,y)\geq  L v(x,y), \ \forall (x,y) \in  D$ and $ w \geq  v$ on the boundary $\partial D$, then $w (x,y) \geq v(x,y),\ \forall (x,y) \in \bar D$.
\end{corollary}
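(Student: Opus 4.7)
The plan is to reduce the pointwise inequality $w \geq v$ on $\bar D$ to a direct application of Theorem~\ref{compar1}, applied to the difference $\phi := w - v$.

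First I would verify the hypotheses of Theorem~\ref{compar1} for $\phi$. The regularity is inherited immediately: since $w, v \in C^0(\bar D) \cap C^2(D)$, so is $\phi$. Linearity of the operator $Lu = -\ve \triangle u + u_x$ (constant coefficients, no zeroth-order term) yields
\[
L\phi \;=\; Lw - Lv \;\geq\; 0 \quad \text{on } D,
\]
while the boundary hypothesis $w \geq v$ on $\partial D$ says precisely that $\phi \geq 0$ on $\partial D$.

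Next I would apply Theorem~\ref{compar1} to $\phi$, which gives
\[
\min_{\bar D} \phi \;\geq\; \min_{\partial D} \phi \;\geq\; 0,
\]
i.e.\ $w(x,y) \geq v(x,y)$ for every $(x,y) \in \bar D$, as required.

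I expect no real obstacle: the corollary is the standard linearity-plus-minimum-principle deduction. The only delicate point --- that the convection term $u_x$ prevents a naive interior-minimum argument --- has already been absorbed into the proof of Theorem~\ref{compar1} via the substitution $v = e^{-x/(2\ve)} w$, so nothing new is required here. The one thing worth double-checking in a careful write-up is simply that $L$ is linear, which is immediate from its definition.
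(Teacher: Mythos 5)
Your proof is correct and is exactly the argument the paper intends: the corollary is stated as an immediate consequence of Theorem~\ref{compar1} applied, via linearity of $L$, to the difference $w-v$. Nothing further is needed.
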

For any open connected subdomain $D \subset \Omega$ we define the boundaries
\[
\partial D _N:=  \{ \vec p \in \partial D \vert  (1,0)\cdot \vec n_p >0\} \quad
\hbox{and} \quad 
\partial D _O:= \partial D \setminus \partial D _N;
\]
where $\vec n_p$ is the outward normal to $\partial D$ at $\vec p$. 
As in \cite[pg.65]{prot}, we can establish

\begin{theorem}\label{compar2} If $ w , v \in C^0(\bar D ) \cap C^2(D ), D \subset  \Omega$ are such that  $ L w (x,y)\geq  Lv(x,y), \ \forall (x,y) \in  D$;  $ w \geq  v$ on the boundary $\partial D _O  $ and $ w_x \geq  v_x$ on the boundary $\partial D _N  $, then $ w (x,y) \geq  v(x,y),\ \forall (x,y) \in \bar D$.
\end{theorem}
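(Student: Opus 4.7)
The plan is to reduce to a difference function and reuse the exponential substitution from the proof of Theorem~\ref{compar1} to strip off the convection term, then run a minimum principle plus a boundary-point (Hopf-type) argument on the purely second-order plus zeroth-order operator that results. Set $z := w-v$, so that $Lz \ge 0$ in $D$, $z \ge 0$ on $\partial D_O$, and $z_x \ge 0$ on $\partial D_N$, and define $\phi$ through $z = e^{x/(2\ve)} \phi$. Exactly as in Theorem~\ref{compar1}, the transformed function satisfies
\[
M\phi := -\ve \triangle \phi + \frac{1}{4\ve}\,\phi \ge 0 \quad \text{in } D,
\]
and $\phi$ has the same sign as $z$, so it suffices to rule out $\min_{\bar D} \phi < 0$.

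Suppose for contradiction that this minimum is negative and attained at $p^\ast \in \bar D$. Since $M$ has a strictly positive zeroth-order coefficient and no first-order terms, an interior negative minimum is immediately excluded: at such a point $\triangle \phi(p^\ast) \ge 0$ and $\phi(p^\ast) < 0$ would force $M\phi(p^\ast) < 0$. Hence $p^\ast \in \partial D$. On $\partial D_O$ we have $\phi \ge 0$, so $p^\ast$ must lie in $\partial D_N$ (and, since $\partial D_O$ is closed and $\phi(p^\ast)<0$, in its relative interior).

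The key step is then the boundary-point lemma applied to $M$. At the smooth boundary minimum $p^\ast$, the interior sphere condition (valid for the piecewise-smooth subdomains $D$ we use) together with $M\phi \ge 0$ and $\phi(p^\ast) < 0$ gives the strict inequality
\[
\frac{\partial \phi}{\partial \vec n_p}(p^\ast) < 0.
\]
I will contradict this by direct computation. Differentiating $z = e^{x/(2\ve)}\phi$ yields $\phi_x = e^{-x/(2\ve)} z_x - \phi/(2\ve)$, and at $p^\ast$ the hypothesis $z_x \ge 0$ combined with $\phi(p^\ast) < 0$ gives $\phi_x(p^\ast) > 0$. Because $p^\ast$ is an interior minimum of $\phi$ restricted to the smooth curve $\partial D_N$, the tangential derivative vanishes there, and writing $\vec n_p = (n_1,n_2)$ with $n_1 > 0$ (by the very definition of $\partial D_N$) and $\vec t_p = (-n_2,n_1)$ lets me solve for $\phi_y$ and obtain
\[
\frac{\partial \phi}{\partial \vec n_p}(p^\ast) = n_1\phi_x + n_2\phi_y = \frac{\phi_x(p^\ast)}{n_1} > 0,
\]
contradicting the Hopf inequality.

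The main obstacle is this last step: translating the tangential condition $z_x \ge 0$ (which is \emph{not} the outward normal derivative unless $\vec n_p = (1,0)$) into the correct sign of $\partial \phi/\partial \vec n_p$. The decisive ingredients are (i) the defining property $n_1 > 0$ of $\partial D_N$, which guarantees the algebraic manipulation above makes sense, and (ii) the smoothness of $\partial D$ at $p^\ast$, needed both for the interior sphere condition in Hopf's lemma and for the vanishing of the tangential derivative; this is why the decomposition of $\partial D$ into $\partial D_N$ and $\partial D_O$ must send any potential corners into the ``Dirichlet'' part $\partial D_O$, where the pointwise bound $\phi \ge 0$ settles the issue without any derivative argument.
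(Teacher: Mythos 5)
Your argument is correct and is essentially the proof the paper intends: the paper itself gives no details (deferring to Protter--Weinberger, pg.~65), and your reduction via $z=e^{x/(2\ve)}\phi$ followed by exclusion of an interior negative minimum and the Hopf boundary-point lemma at a putative negative minimum on $\partial D_N$ is exactly that standard argument, reusing the substitution already introduced in the proof of Theorem~\ref{compar1}. One minor simplification: the strong form of the Hopf lemma bounds the derivative in \emph{any} outward-pointing direction, and $(1,0)$ is outward-pointing on $\partial D_N$ by the very definition $(1,0)\cdot\vec n_p>0$, so $\phi_x(p^\ast)<0$ follows directly and the tangential-derivative computation can be omitted.
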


 Let us partition the domain $\Omega $ into a finite number of non-overlapping subdomains $\{ D_i \} _{i=1}^n$ such that
\[
D_i \cap D_j = \emptyset , \  i \neq j, \quad \bar \Omega = \cup _{i=1}^n \bar D_i \quad \hbox{and} \quad \Gamma _{i,j} :=  \bar D_i \cap \bar D_j.
\]
 Let $\frac{\partial u }{\partial n_i}$ 
denote the outward normal derivative of each subdomain $D_i$ and
\[
 \ \bigl[ \frac{\partial u }{\partial n} \bigr] _{\Gamma _{i,j}}= \frac{\partial u }{\partial n_i}  \vert _{\Gamma _{i,j}} + \frac{\partial u }{\partial n_j} \vert _{\Gamma _{i,j}}.
\]
Using the usual proof by contradiction argument (with a separate argument for the interfaces $\Gamma _{i,j}$) we can establish the following 

\begin{theorem}\label{compar3} 
{ If $ w,v \in C^0(\bar \Omega ) \cap  C^2(\cup _{i=1}^nD_i )$  is such that 
(a)  $ L w (x,y)\geq   L v (x,y), \ \forall (x,y) \in  D_i$ for all $i$;  (b)   $ \bigl[ \frac{\partial w }{\partial n} \bigr] _{\Gamma _{i,j}} \geq \bigl[ \frac{\partial v }{\partial n} \bigr] _{\Gamma _{i,j}}$ for all $i,j$; 
(c) $ w \geq  v$ on the boundary $\partial \Omega _O  $ and  (d) $ w_x \geq  v_x$ on the boundary $\partial \Omega _N  $, then $ w (x,y) \geq  v(x,y),\ \forall (x,y) \in \bar \Omega$. }
\end{theorem}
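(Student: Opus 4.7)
My approach is to follow the template of Theorem~\ref{compar1} and Theorem~\ref{compar2}, extending it to accommodate the interface hypothesis, all by contradiction. Setting $z:=w-v$, the hypotheses reduce to $Lz\ge 0$ on each $D_i$, $z\ge 0$ on $\partial\Omega_O$, $z_x\ge 0$ on $\partial\Omega_N$, and a one-sided bound on $[\partial z/\partial n]_{\Gamma_{i,j}}$ at each interface; the goal becomes $z\ge 0$ throughout $\bar\Omega$. Following the device in the proof of Theorem~\ref{compar1}, I introduce the transformed function $y:=e^{-x/(2\ve)}z$, for which a direct computation yields
\[
Lz=e^{x/(2\ve)}\Bigl(-\ve\triangle y+\tfrac{1}{4\ve}y\Bigr),
\]
so that $-\ve\triangle y+y/(4\ve)\ge 0$ on each $D_i$. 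The strictly positive zeroth-order coefficient $1/(4\ve)$ makes both the strong minimum principle and Hopf's lemma available on each subdomain.

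I would then suppose, for contradiction, that $y$ attains a strictly negative minimum at some $p^{*}\in\bar\Omega$, and analyse the four possible locations of $p^{*}$. If $p^{*}$ is interior to some $D_i$, the standard local-minimum argument ($\nabla y(p^{*})=0$, $\triangle y(p^{*})\ge 0$) combined with the transformed PDE forces $y(p^{*})\ge 0$, a contradiction. If $p^{*}\in\partial\Omega_O$, the hypothesis $z\ge 0$ yields $y(p^{*})\ge 0$ directly. If $p^{*}\in\partial\Omega_N$, I apply Hopf's lemma on the abutting subdomain $D_i$ (non-constancy of $y$ on $\bar D_i$ is automatic, since a constant negative value would already violate the interior case); this produces $\partial y/\partial n_i(p^{*})<0$, and, since the outward normal on $\partial\Omega_N$ is $(1,0)$, it gives $y_x(p^{*})<0$. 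Rewriting $z_x=e^{x/(2\ve)}(y_x+y/(2\ve))$ shows that both summands are strictly negative at $p^{*}$, so $z_x(p^{*})<0$, contradicting the Neumann-type hypothesis $z_x\ge 0$.

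The remaining case $p^{*}\in\Gamma_{i,j}$ is the main obstacle, and it is handled by applying Hopf's lemma simultaneously on the two abutting subdomains. Strict negativity of both one-sided outward normal derivatives, $\partial y/\partial n_i(p^{*})<0$ and $\partial y/\partial n_j(p^{*})<0$, produces a strictly signed jump $[\partial y/\partial n]_{\Gamma_{i,j}}(p^{*})$. A short calculation that exploits $n_j=-n_i$, together with continuity of $y$ across $\Gamma_{i,j}$ (which forces the tangential components of $\nabla y$ from the two sides to agree, so the gradient jump is purely normal), shows that the contributions of the exponential weight in the $D_i$ and $D_j$ sums cancel, leaving $[\partial z/\partial n]_{\Gamma_{i,j}}(p^{*})=e^{x/(2\ve)}[\partial y/\partial n]_{\Gamma_{i,j}}(p^{*})$ with the same strict sign, contradicting the interface hypothesis. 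The delicate technical point is ensuring that Hopf's lemma is available simultaneously on both sides, which tacitly requires each $D_i$ to satisfy an interior-sphere condition at each interior boundary point of $\Gamma_{i,j}$; this is a mild smoothness assumption implicit in the statement.
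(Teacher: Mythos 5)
Your overall architecture --- the exponential transformation $y=e^{-x/(2\ve)}z$ borrowed from Theorem~\ref{compar1}, a contradiction at a negative minimum of $y$, and a four-way case analysis (interior, $\partial\Omega_O$, $\partial\Omega_N$ via Hopf, interface via two-sided Hopf) --- is exactly the ``usual proof by contradiction argument (with a separate argument for the interfaces)'' that the paper invokes without writing out, and the first three cases are handled correctly (including the non-constancy point needed for Hopf and the interior-sphere caveat, which you rightly flag).

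There is, however, a genuine problem in the interface case: the strict inequality you derive does not contradict the stated hypothesis. At a negative minimum $p^*\in\Gamma_{i,j}$, Hopf's lemma applied on both sides gives $\frac{\partial y}{\partial n_i}(p^*)<0$ and $\frac{\partial y}{\partial n_j}(p^*)<0$, so with the paper's displayed definition $\bigl[\frac{\partial u}{\partial n}\bigr]=\frac{\partial u}{\partial n_i}+\frac{\partial u}{\partial n_j}$ (sum of the two \emph{outward} normal derivatives) you obtain $\bigl[\frac{\partial y}{\partial n}\bigr](p^*)<0$ and hence, after your (correct) cancellation of the zeroth-order terms, $\bigl[\frac{\partial z}{\partial n}\bigr](p^*)<0$. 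But the hypothesis of the theorem is $\bigl[\frac{\partial w}{\partial n}\bigr]\leq\bigl[\frac{\partial v}{\partial n}\bigr]$, i.e.\ $\bigl[\frac{\partial z}{\partial n}\bigr]\leq 0$, which is perfectly consistent with what you derived: a V-shaped negative kink of $z$ at the interface is \emph{permitted} by that inequality, so no contradiction arises and the proof does not close. The contradiction requires the opposite inequality, $\bigl[\frac{\partial z}{\partial n}\bigr]\geq 0$, which rules out convex (minimum-type) kinks. This reflects a sign inconsistency inside the paper itself: in Appendix~1 the quantity $[B^-_x](0,y)$ is computed as the classical jump $B^-_x(0^+,y)-B^-_x(0^-,y)=-\frac{\kappa}{\ve}(1-\frac{1}{|y|})\leq 0$, which is the \emph{negative} of the sum-of-outward-normals quantity (that sum is $+\frac{\kappa}{\ve}(1-\frac{1}{|y|})\geq 0$ for the barrier). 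So the theorem is true and your argument works once the interface hypothesis is read with the classical-jump convention (or equivalently with the inequality reversed relative to the displayed definition of $\bigl[\cdot\bigr]$); but as written, your final sentence asserts ``the same strict sign, contradicting the interface hypothesis'' where in fact the signs agree. You need to either reverse the inequality in the hypothesis you are contradicting, or explicitly identify $\bigl[\frac{\partial z}{\partial n}\bigr]$ with the classical jump of the normal derivative across $\Gamma_{i,j}$ and redo the sign bookkeeping accordingly.
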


\begin{proof}
 Consider the function
$ z := e^{-\frac{x}{2\ve} } (w-v)$ and assume that $z(\vec p) := \min _{\bar \Omega} z <0$. By (c), $z$ is not a constant function and $\vec p \not \in 
\partial \Omega _O$. Note that $(w-v)_x =e^{\frac{x}{2\ve} }(z_x+\frac{1}{2\ve}z)$ and so, by (d),  $\vec p \not \in \partial \Omega _N$. By (a),
$\vec p \not \in D_i$ for any $i$, as $L(w-v) =e^{\frac{x}{2\ve} }(-\ve \Delta z +\frac{1}{4\ve}z)$.
Finally,
\[
 \ \bigl[ \frac{\partial (w-v) }{\partial n} \bigr] _{\Gamma _{i,j}}= e^{\frac{x}{2\ve} } \ \bigl[ \frac{\partial z }{\partial n} \bigr] _{\Gamma _{i,j}}
\]
and, using the argument \cite[Theorems 7 and 8, pp. 65-67]{prot} over each subdomain $D_i$, we have that $\frac{\partial z }{\partial n}(\vec p)  <0$ if $\vec p \in \Gamma _{i,j}$. Hence we have the strict inequality $\bigl[ \frac{\partial z }{\partial n} \bigr] _{\Gamma _{i,j}}(\vec p) <0$, which contradicts (b).
\end{proof}

Using these results, we can establish the following bounds on the solution:
\begin{theorem}\label{main} Assuming $\ve$ is sufficiently small, then the solution $u$ of problem (\ref{cont-prob}) satisfies the following bounds
\begin{subequations}\label{solutions-bounds}
\begin{eqnarray} 
0 \leq  u(x,y) \leq 1, && (x,y) \in \bar \Omega ; \label{basic}\\
\tilde u(r,\theta) \leq Ce^{\frac{\cos (\theta ) (r-1)}{\ve}}, && \cos \theta \leq   0,\ r \geq 1;
\label{left}\\
 u (x,y)\leq Ce^{-\frac{ (\vert y \vert -1)}{\sqrt{\ve}}}, && \vert y \vert  \geq 1,\ -R < x< R; \label{right}\\
 u(x,y) \leq Ce^{-\frac{(0.5x^2+\vert y \vert -1)}{3\ve ^{2/3}}}, &&   x \in \ve ^{1/3}[-1,1],\    -C \ve ^{2/3} \leq 0.5x^2+\vert y \vert -1  \leq C \ve ^{2/3}. \label{center} 
\end{eqnarray}
\end{subequations}
\end{theorem}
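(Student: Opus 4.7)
The plan is to establish each bound in (\ref{solutions-bounds}) by exhibiting an explicit barrier $w$ satisfying $Lw \geq 0$ on a carefully chosen subdomain $D \subset \Omega$ with $w \geq u$ on $\partial D_O$ and $w_x \geq u_x$ on $\partial D_N$; Theorem \ref{compar2} (or Theorem \ref{compar3} with a patched barrier) then delivers the pointwise inequality. The exponential shape of each barrier is chosen to match the asymptotic scale of the layer it tracks.

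Bound (\ref{basic}) is immediate: the constants $w \equiv 1$ and $w \equiv 0$ are themselves $L$-harmonic, dominate (resp.\ underestimate) $u$ on every boundary component, and have vanishing Neumann derivative. For (\ref{left}) I would work in polar coordinates on the left half-annulus $D=\{1 \leq r \leq R,\; \cos\theta \leq 0\}$ and test $\tilde w = e^{\cos\theta(r-1)/\ve}$; writing out $\tilde L\tilde w = -\ve\tilde\Delta\tilde w + \tilde w_x$ in polar variables yields
\[
\tilde L\tilde w = \Bigl(-\frac{\cos\theta}{r^2} + \frac{\sin^2\theta\,(r-1)}{r^2\ve}\Bigr)\tilde w \geq 0
\]
on $D$, since both summands are nonnegative. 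The boundary data $\tilde w = 1 = u$ on $r=1$, $\tilde w \geq 0 = u$ on $r=R$, and $\tilde w = 1 \geq u$ on the interface $\theta = \pm\pi/2$ (using (\ref{basic})) close the argument.

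For (\ref{right}) I would work on $D^+ = \{y \geq 1\} \cap \Omega$ and test the product barrier $w = Ce^{2x-(y-1)/\sqrt{\ve}}$, for which $w_{xx}=4w$, $w_{yy}=w/\ve$ and $w_x=2w$ give $Lw = (1-4\ve)w \geq 0$ provided $\ve \leq 1/4$; the $e^{2x}$ factor is introduced precisely to produce this sign, since the pure $y$-exponential alone gives $Lw = -w$. Choosing $C$ large enough that $Ce^{2x} \geq 1$ throughout $x \in [-\sqrt{R-1},R]$ makes $w \geq 1 \geq u$ on the inflow segment $y=1$; on the outer arc and top segment $u = 0 \leq w$, and on the Neumann piece $x = R$ one has $w_x = 2w > 0 = u_x$, so Theorem \ref{compar2} applies. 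The $y \to -y$ symmetry of $L$ yields the analogous bound for $y \leq -1$, and absorbing $e^{2x} \leq e^{2R}$ into the constant produces the stated form.

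The hard step is (\ref{center}). A direct barrier of the form $w = e^{-(0.5x^2 + \vert y\vert - 1)/(3\ve^{2/3})}$ fails to satisfy $Lw \geq 0$ throughout the strip $\vert x\vert \leq \ve^{1/3}$: a short computation shows that the convective contribution $w_x = -x/(3\ve^{2/3})\,w$ spoils the sign of $Lw$ for $x > 0$, so a single barrier of exactly this shape does not suffice. However, the region of assertion $0.5x^2 + \vert y\vert - 1 \in [-C\ve^{2/3},C\ve^{2/3}]$ is so narrow that the exponent in the claim is bounded of order $C/3$, and the right-hand side of (\ref{center}) is uniformly bounded below by a positive constant; the bound therefore follows from (\ref{basic}) with a sufficiently large constant, recast in the asserted form so as to expose the parabolic transition scale $(x, \vert y\vert -1) \sim (\ve^{1/3}, \ve^{2/3})$. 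A more informative proof works directly in the parabolic coordinate frame $(s,t)$ introduced in the notation section: in $(s,t)$-coordinates the transition scaling is axis-aligned, a barrier with the claimed exponential shape can be verified to have the correct sign of $L$ on a suitable reduced subdomain, and Theorem \ref{compar3} is used to patch it to the barriers underlying (\ref{left}) and (\ref{right}) along the interfaces $x=0$ and $\vert y\vert = 1$.
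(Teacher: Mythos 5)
Your proposal is correct, and for three of the four bounds it uses essentially the paper's barriers, deployed in a slightly different way. The paper works with \emph{patched} barriers defined on all of $\Omega$ (e.g.\ $B^-=e^{\kappa\cos\theta(r-1)/\ve}$ for $x\le 0$ extended by the constant $1$ for $x\ge 0$, and $B^+=e^{C_1(1+x)}e^{-(y-1)/\sqrt{\ve}}$ for $y\ge 1$ extended by $e^{C_1(1+x)}$ for $y\le 1$) and invokes Theorem \ref{compar3}, which requires checking the sign of the normal-derivative jump across the patching interface. You instead restrict each barrier to the half-annulus $\{\cos\theta\le 0\}$ or the half-strip $\{y\ge 1\}$ and feed the crude bound $u\le 1$ from (\ref{basic}) as Dirichlet data on the artificial interface; this avoids the jump computation and is, if anything, cleaner. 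Your polar computation $\tilde L\tilde w=\bigl(-\cos\theta/r^2+\sin^2\theta\,(r-1)/(r^2\ve)\bigr)\tilde w$ agrees with the paper's (which keeps a general $\kappa\le 1$ where you take $\kappa=1$), and $Lw=(1-4\ve)w$ matches the paper's choice $C_1=2$. One citation-level correction: for (\ref{left}) the interface $x=0$ has outward normal $(1,0)$ from the left half-annulus, so under the boundary classification it belongs to $\partial D_N$, where Theorem \ref{compar2} demands $w_x\ge u_x$ rather than $w\ge u$; since your subdomain has no genuine Neumann boundary, the correct tool is the pure-Dirichlet Corollary to Theorem \ref{compar1}. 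For (\ref{right}) your use of Theorem \ref{compar2} is fine, because $y=1$ is characteristic ($\vec n\cdot(1,0)=0$, hence in $\partial D_O$) and the only $\partial D_N$ piece is $x=R$, where $w_x=2w>0=u_x$.

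On (\ref{center}) you have spotted something real: as literally stated, with the region confined to $-C\ve^{2/3}\le 0.5x^2+|y|-1\le C\ve^{2/3}$, the right-hand side is bounded below by $e^{-C/3}$ and the inequality is an immediate consequence of $u\le 1$. The paper's own proof effectively concedes this: after constructing the corrected barrier $B=(1+\alpha x\ve^{-1/3})(1-\alpha C)^{-1}e^{-\kappa(0.5x^2+y-1)/\ve^{2/3}}$ --- the prefactor $1+\alpha x\ve^{-1/3}$ being exactly the device that repairs the sign failure of $L$ applied to the bare exponential, which you correctly diagnose --- it restricts to the strip where $e^{-\kappa\mu}\le B\le e^{\kappa\mu}$ and closes with $e^{\kappa\mu}B\ge 1\ge u$ on the boundary, so the comparison step adds nothing to the inequality as stated. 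The genuine content of the paper's construction is the decay mechanism along the parabolas $0.5x^2+y=\mathrm{const}$, which is what motivates the transition parameters $\tau$ and $\tau_3$; your short argument proves the theorem as written but would not by itself justify those mesh choices, and your sketched $(s,t)$-coordinate alternative, while plausible, is not carried out and is not needed for the statement as given.
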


\begin{proof}
The first bound follows easily from the minimum principle. See details in the appendix for all of the remaining bounds. 
\end{proof}

\section{Computationally testing for  convergence}

As identified in \cite{hemker}, we do not have a useable closed form representation of the exact solution to the Hemker problem to allow us  to evaluate the accuracy of any computed approximation. The infinite series representation \cite{hemker}
for the exact solution  has difficulties for moderately small values of the singular perturbation parameter $\ve$. Hence, to test for  convergence we rely on the double-mesh method of estimating the order of convergence \cite[Chapter 8]{fhmos}. We elaborate on this experimental approach in this section.

For every particular value of $\ve$ and $N$, let  $U_\ve^N$ be the computed solutions  on certain  meshes $\Omega _\ve ^N$, where $N$ denotes the number of mesh elements used in each co-ordinate direction.  Define the maximum local two-mesh global differences $ D^N_\ve$ and the maximum parameter-uniform two-mesh global differences $D^N$ by
\footnote{{In passing we note that, in general, for a piecewise-uniform Shishkin mesh $ \Omega _\ve^N \cup \Omega _\ve^{2N} \neq \Omega _\ve^{2N}$, as the transition point (where the mesh is not uniform) depends on $N$.}}
\[
D^N_\ve:= \Vert \bar U_\ve^N-\bar U_\ve ^{2N}\Vert _{\Omega _\ve^N \cup \Omega _\ve^{2N}} \quad \hbox{and}  \quad D^N:= \sup _{0 < \ve \leq 1} D^N_\ve,
\]
where $\bar U_\ve ^N$  denotes the bilinear interpolation of the discrete solution $U_\ve ^N$ on the mesh $\Omega _\ve ^{N}$. 
 Then, for any particular value of $\ve $ and $N$, the local orders of global convergence are denoted by $\bar p ^{N}_\ve$ and, for any particular value  of $N$ and {\it all values of $\ve $}, the  {\bf parameter-uniform} global orders of   convergence $\bar p ^N$ are defined, respectively,  by 
\[
 \bar p^N_\ve:=  \log_2\left (\frac{D^N_\ve}{D^{2N}_\ve} \right) \quad \hbox{and} \quad  \bar p^N :=  \log_2\left (\frac{D^N}{D^{2N}} \right).
\]
If, for a certain class ${\cal{C}}$ of singularly perturbed problems, there exists a theoretical error bound of the form: There exists a constant $C$ independent of $\ve$ and $N$ such that for all $\ve >0$
\begin{equation}\label{theory}
\Vert \bar U^N -u \Vert _{\Omega}\leq CN^{-p},\quad p >0;
\end{equation}  then it follows that 
\[D^N \leq C(1+2^{-p}) N^{-p}. \]
Hence, for any particular sample problem from this class ${\cal{C}}$, we expect to observe this theoretical convergence rate $p$ in the computed rates of convergence $\bar p^N $. That is, we expect that $\bar p^N \approx p$. 

A useful attribute of the two-mesh method is that we can use it to identify when a numerical method is not parameter-uniform.
Observe that 
\[
\Vert \bar U_\ve ^N - \bar U_\ve ^{2N} \Vert _{\Omega} \leq \Vert \bar U_\ve^N - u \Vert_{\Omega} +\Vert u - \bar U_\ve ^{2N} \Vert_{\Omega}.
\]
Hence, if the parameter-uniform two mesh differences $D^N$ fail to converge to zero, then the numerical method is also not a parameter-uniform numerical method. 
In our quest for a parameter-uniform numerical method, we used this feature to identify necessary components to construct  a parameter-uniform numerical method. 
On the other hand, without the existence of a theoretical error bound (\ref{theory}) (as is the case with the Hemker problem),  if the global two mesh differences $D^N$ are seen to converge then we can only conclude that the numerical method may be a parameter-uniform numerical method. 
We would require a theoretical parameter-uniform  error bound on the numerical approximations, before one can  assert that the numerical method is indeed parameter-uniform. 

For any numerical method applied to  a class of singularly perturbed problems, our primary interest is in determining the parameter-uniform orders of global convergence $\bar p^N$. However, we can also examine the local orders of convergence $\bar p_\ve^N$ to see how the numerical method performs for each possible value of $\ve$ over the range $0<\ve \leq 1$. In general, we note that $\bar p^N \neq \min _{\ve}  \bar p^N_\ve$. 
In the case of piecewise-uniform meshes, certain anomalies can sometimes be observed in the local orders of convergence (i.e., $\bar p_\ve ^N \not\approx \bar p^N$) for certain values of $\ve$. 
We illustrate this effect with  the following theoretical example. Based on the nature of the typical errors on a piecewise-uniform Shishkin mesh in a one dimensional convection-diffusion problem, suppose that the two-mesh differences $D_\ve^N$ were of the form
\[
D_\ve ^N := \Bigl\{ \begin{array}{ll} \frac{\rho}{1+\rho} \qquad   \hbox{if} \quad k\ve \ln N \geq 1 \\
N^{-1} \qquad \hbox{if} \quad k\ve \ln N < 1
\end{array} ; \qquad  \hbox{where} \quad \rho := \frac{1}{\ve N}, \quad k \geq 1 .
\]For this theoretical example, the parameter-uniform two mesh differences can be explicitly determined. 
Note first that 
\[
D_\ve ^N  \leq \Bigl\{ \begin{array}{ll} \frac{k\ln N}{N+k\ln N} \qquad \qquad  \hbox{if} \quad k\ve \ln N \geq 1 \\
N^{-1} \qquad \qquad  \hbox{if} \quad k\ve \ln N < 1
\end{array} .
\]
Hence, if $N\geq 4, k \geq 1$, we have 
\[
D^N =\frac{k\ln N}{N+k\ln N} \quad \hbox{and} \quad \lim _{N\rightarrow \infty} \bar p^N =1. 
\]
Let us now consider the particular values of $\ve = 2^{-4},k=4$ and $N=32$. First observe that
\[
D_{\ve =2^{-4}} ^N  = \Bigl\{ \begin{array}{ll} \frac{\rho _*}{1+\rho _*} \qquad \qquad  \hbox{if} \quad  \ln N \geq 4 \\
N^{-1}\qquad \qquad  \hbox{if} \quad   \ln N < 4
\end{array} , \qquad \hbox{where} \quad \rho _* = 16N^{-1}.
\]
Then, in particular,
\[
D_{\ve =2^{-4}} ^{32} = 2^{-5}, D_{\ve =2^{-4}} ^{64} =0.2,
\]
which yields the order of convergence as 
\[
\bar p_{\ve =2^{-4}} ^{32} \approx -2.68 \quad \hbox{although}\quad  \bar p ^{32} \approx 0.55.
\]
Thus, we can have negative local orders of  convergence  $\bar p^N_\ve$, for particular values of $\ve $ and $N$ and still have positive parameter-uniform orders of convergence
$\bar p^N$. This phenomena will appear in the numerical results section in \S 5.  

In practice, note that the parameter-uniform orders $\bar p^N$ can only be estimated over a finite set $R^J_\ve := \{2^{-j}, j=0,1,\ldots J\}$ of values of the singular perturbation parameter 
$\ve \in (0,1]$. That is, we define
\[
\bar p^N_{R^J_\ve} :=  \log_2\left (\frac{D_{R^J_\ve}^N}{D_{R^J_\ve}^{2N}} \right), \quad \hbox{where} \quad  D_{R^J_\ve}^N:= \max _{\ve \in R^J_\ve } D^N_\ve.
\]
When a method is known to be parameter-uniform, $J$ is taken sufficiently large so $\bar p^N_{R^K_\ve} = \bar p^N_{R^J_\ve}$, for any $K >J$ and $\bar p^N_{R^J_\ve}$ is taken to be the computed estimate of 
$\bar p^N$.

In this paper, we construct a numerical method that  displays positive  orders of convergence $\bar p^N_{R^{20}_\ve}$, for $N$ sufficiently large; i.e.,  $N \geq N_0$, where $N_0$ is independent of $\ve$, when the numerical method is applied to the Hemker problem and the range of the singular perturbation parameter is $\ve \in R_\ve:= \{2^{-j}, j=0,1,\ldots 20\}$.
For smaller values of the parameter ($\ve < 2^{-24}$), we have observed a degradation in the local orders of  convergence. Hence, we cannot claim that the numerical method described in this paper  is parameter-uniform.  This lack of convergence might be due to the presence of an unidentified singularity; but, in effect,  the character of the solution for $\ve < 2^{-20}$ remains an open question.

To conclude this section, we note that the two-mesh differences are computed to enable the computation of approximate orders of convergence. To generate approximations to the  pointwise errors for any particular value of $N$ and $\ve$,  we compare the computed solution for a given number of mesh points $N$ to the computed solution on a fine mesh.
That is, we approximate the nodal error by 
\[
\Vert U_\ve^N-u \Vert _{\Omega ^N_\ve } \approx \Vert  U_\ve^N-\bar U_\ve^{4N_*} \Vert _{\Omega _\ve^N }, \quad \hbox{for any} \quad N \leq N_*; \]
and the global error by
\[
\Vert \bar U_\ve^N-u \Vert _{\Omega } \approx \Vert \bar U_\ve^N-\bar U_\ve^{4N_*} \Vert _{\Omega _\ve^N \cup  \Omega _\ve^{4N_*}}, \quad \hbox{for any} \quad N \leq N_*.
\]
\section{The numerical algorithm}

Polar coordinates are a natural co-ordinate system to employ  in the semi-annular region to the left of the line $x=0$ and 
rectangular co-ordinates are natural to the right of the line $x=0$. To incorporate both co-ordinate systems, we first generate an approximate solution to the problem  (\ref{cont-prob}) on the sector
\begin{equation}\label{annulus}
\tilde A:= \{(r,\theta) \vert 1< r < R, \frac{\pi}{2} - \tau\leq  \theta  \leq\frac{3\pi}{2} +\tau\},   
\end{equation}
 which is a proper subset of the domain $\Omega$ and the parameter $\tau$ is specified below in (\ref{tau-1}).

The continuous problem (\ref{cont-prob}), restricted to the sector $\tilde A$, is transformed into the problem: Find a periodic function, $\tilde u(r,\theta) =  u(x,y)$ such that
\begin{subequations}\label{polar}
\begin{eqnarray}
\tilde  L\tilde u:= -\frac{\ve }{r^2}\tilde  u _{\theta \theta}  -\ve  \tilde u_{rr} +  \bigl(\cos (\theta ) -{\frac{\ve}{r}} ) \tilde u _r  - \frac{\sin (\theta )}{r}  \tilde  u _{\theta} = 0,  \ \hbox{in } \tilde A ;\\
\tilde u(1,\theta) =1, \quad \frac{\pi}{2} -\tau \leq \theta  \leq \frac{3\pi}{2} +\tau ; \qquad  \tilde  u(R,\theta) =0, \quad { \frac{\pi}{2} \leq \theta  \leq \frac{3\pi}{2} }.
\end{eqnarray}
The  remaining boundary points of this sector  $\tilde A$ 
are internal points within the domain $\Omega$, where the discrete solution has not yet been specified. 
In order to generate an initial approximation to the solution $\tilde  u$, we  impose 
 homogeneous Neumann conditions at these internal points of the form:
\begin{eqnarray}
 u^A_x (x,y)=0, \quad \hbox{if} \quad x^2+y^2=R^2 \quad \hbox{and} \quad x >0; \\
 \tilde u^A_\theta (r,\frac{\pi}{2}-\tau  )= \tilde u^A_\theta (r,\frac{3\pi}{2}+\tau  )=0, \quad \hbox{for} \quad 1 < r < R,
\end{eqnarray}
\end{subequations}
where $u^A(x,y) \approx u(x,y)$ for $x \leq 0$.

\begin{remark}The choice of a  homogeneous Neumann condition at the outflow is motivated by the following observation:
Consider the one-dimensional convection-diffusion problem: Find $z(x), x \in [0,L]$ such that
\[
-\ve z'' +az'=f(x), \ x \in (0,L);\quad  z(0)=A, z(L)=B; \quad a(x) \geq \alpha >0
\]
and the approximate problem: Find $z_A(x), x \in [0,L]$ such that
\[
-\ve z_A'' +az_A'=f(x), \ x \in (0,L);\quad  z_A(0)=z(0), z'_A(L)=0.
\]
Using a comparison principle (as in Theorem \ref{compar2}), we can establish the bound
\[
\vert (z-z_A)(x) \vert \leq \frac{\ve \vert z'(L)\vert}{\alpha } e^{-\frac{\alpha(L-x)}{\ve}}.
\]
Hence, $z_A$ is an $O(N^{-1})$-approximation to $z$, at some $O(\ve \ln N)$ distance away from the end-point $x=L$. That is:
\[
\vert (z-z_A)(x) \vert \leq C\ve \vert z'(L)\vert N^{-1}\leq CN^{-1}; \quad \hbox{if} \quad x \in [0, L-\frac{\ve \ln N}{\alpha}].
\]
\end{remark}

This problem (\ref{polar}) is discretized using simple upwinding on a tensor product piecewise-uniform Shishkin mesh,  whose construction is motivated by the bounds (\ref{left}), (\ref{center}).  Two transition points (where the mesh step changes in magnitude) are used in the radial direction. The  choice of the first transition point $\sigma _1$ is motivated by considering the bound (\ref{left}) at some fixed distance $x < - \delta <0$ to the left of $x=0$ and by the theoretical error bounds \cite{annulus} established  for this mesh in the region where $x < - \delta <0$. The choice of this point $x=-\delta$ is arbitrary.  As in \cite{annulus},  we simply take it to correspond to a criticial angle $\theta _*$, such that
\[
\kappa \cos \theta _* = -\frac{1}{2}, \quad 0.5 < \kappa \leq 1,
\]
where $\kappa$ is again arbitrary. 
Hence, we have that
\[
\tilde u (r,\theta) \leq C e^{-\frac{r-1}{2\ve}}, \quad \hbox{for} \quad \theta _* \leq \theta \leq 2\pi - \theta _*
\]
and
\[
\tilde u (r,\theta) \leq CN^{-1}, \quad \hbox{if} \quad r \geq 1+2\ve \ln N \quad \hbox{and} \quad \theta _* \leq \theta \leq 2\pi - \theta _*.
\]
A second transition point $\sigma _2$ is motivated by the bound (\ref{center}) applied along the line $x=0$.  In the angular direction, the bound (\ref{center}) also motivates the inclusion of a transition point $\tau$  in the vicinity of the characteristic points. See also \cite[pg. 269]{eckhaus}, \cite[pg. 188]{ilin} and \cite[pg. 1183]{waechter} for motivation for these scales in the vicinity of the characteristic points $(0,\pm 1)$. 

{\bf The Shishkin mesh $\tilde  \Omega ^{N}_A$}
{\it 
The radial domain $[1,R]$ is divided into three subregions. The  radii 
$
r=1, \ r=1+\sigma _1, \ r=1+ \sigma _1 +\sigma _2, \  r=R,
$
mark the subregion boundaries and  the radial transition points are taken to be
\begin{subequations}\label{fitted-mesh-special}
\begin{equation}
  \sigma _1:= \min \{ \frac{R-1}{4},  2\ve  \ln N   \} \quad \hbox{and} \quad
 \sigma _2:= \min \{ \frac{R-1}{4}, 3 \ve ^{2/3} \ln N  \}.
\end{equation}
The $N$ radial mesh points are distributed in the ratio $N/4:N/4:N/2$ across these three subintervals. 
For the angular coordinate, 
the interval $[\frac{\pi}{2}-\tau,\frac{3\pi}{2}+\tau ]$ is split into three subintervals with the start/end points of each subinterval, respectively, at
\[
\frac{\pi}{2} -\tau, \frac{\pi}{2} +\tau, \frac{3\pi}{2} -\tau, \frac{3\pi}{2} +\tau 
\] 
 and the mesh points are distributed in the ratio $N/4:N/2:N/4$ 
across  the three associated subintervals.  The transition points are determined by 
\begin{equation}\label{tau-1}
 \tau := \min \{ \frac{\pi}{6}, \sqrt{6} \ve^{1/3} \ln N   \}.
\end{equation}
\end{subequations}
}
A schematic  image of this mesh is presented in Figure \ref{Fig.1}. 
  However,  in practice,  the refinement in the radial direction only becomes apparent to the user for very small values of $\varepsilon$.
\begin{figure}  \center{
\includegraphics[width=0.5\textwidth,height=0.5\textwidth]{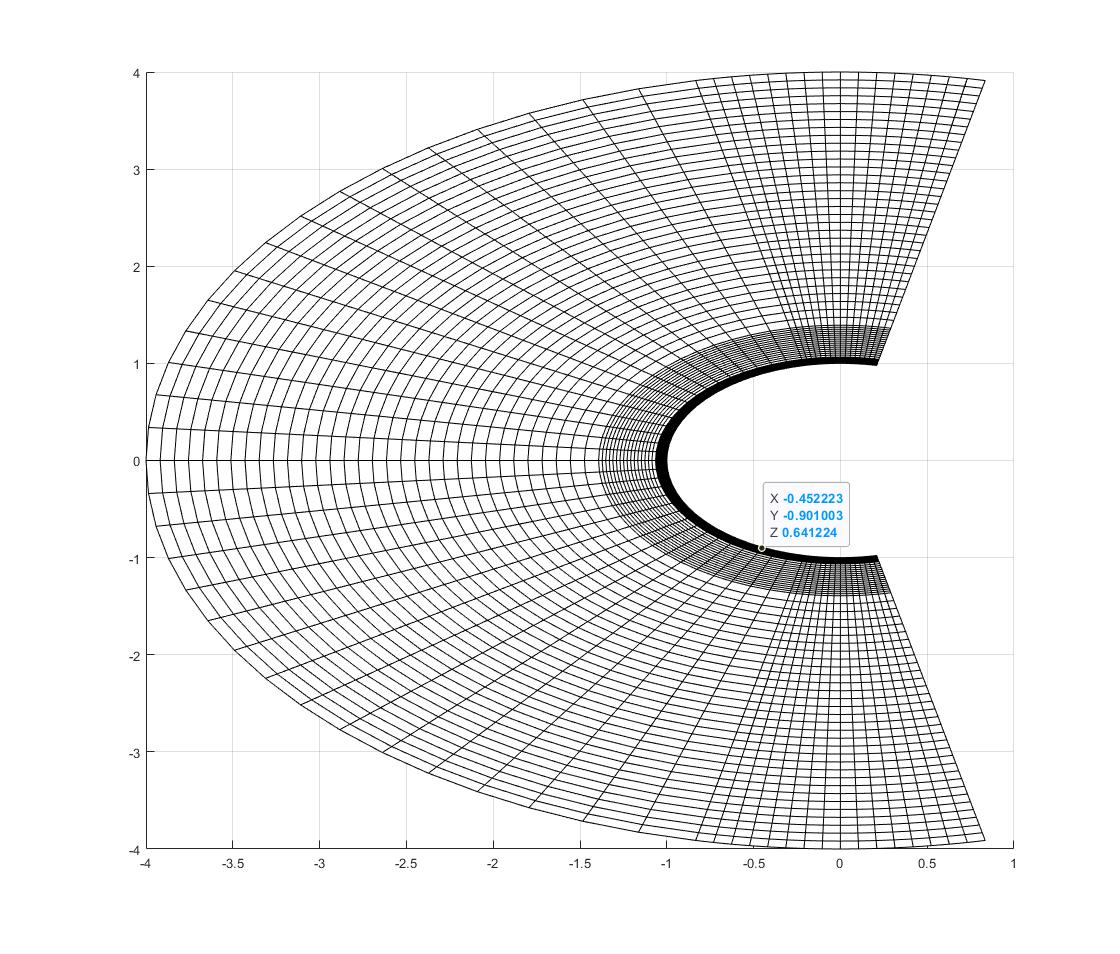}}
\caption{A schematic image of the  mesh $\tilde \Omega _A^N$ (\ref{fitted-mesh-special}) on the  annular subregion
$\tilde A$ (\ref{annulus}) }
\label{Fig.1}  
\end{figure}

 At the mesh points on the sector $ \tilde A$, the computed solution will (in polar coordinates) be denoted by $U_A(r_i,\theta_j), \ (r_i,\theta_j) \in \bar \Omega _A^{N}$. This approximation is extended to the global approximation $\bar U_A(r,\theta),\ (r,\theta) \in \bar A$, using simple bilinear interpolation
{ \footnote{
Over any computational cell $\Omega _{i,j}:= (x_{i-1},x_i)\times (y_{j-1},y_j)$, where  $h_i:=x_i-x_{i-1}, k_j:=y_j-y_{j-1}$,  we denote the bilinear interpolant of any function $g(x,y)$ by $\bar g$.
For any smooth function  $v$, where  { $\Vert v_{xx} \Vert _{\Omega _{i,j}}+ \Vert v_{yy} \Vert _{\Omega _{i,j}}\leq C$, then
$
\Vert v -\bar v \Vert _{\Omega _{i,j}}\leq C h_i^2 +C k_j^2.
$}
For a layer function of the form $w(x,y) = e^{-\alpha x/\ve ^p}$, 
\begin{eqnarray*}
{ 
\Vert w -\bar w \Vert _{\Omega _{i,j}}
\leq C \Vert w \Vert _{\Omega _{i,j}}} \leq CN^{-1}, \quad \hbox{if} \quad x_i \geq \frac{\ve ^p \ln N}{\alpha}, \quad \hbox{and}  \\
{ \Vert w -\bar w \Vert _{\Omega _{i,j}}\leq C h_i\Vert w _x\Vert _{\Omega _{i,j}} }
\leq CN^{-1}\ln N, \quad \hbox{if} \quad h_i \leq C\ve ^p N^{-1} \ln N.
\end{eqnarray*}
}}  

We will utilize the following 
finite difference operators  $D^+_r,D^-_r, D^\pm_r, \delta^2_r$  defined by
\begin{eqnarray*}
D^+_rZ(r_i,\theta _j) :=\frac{Z (r_{i+1},\theta _j)-Z(r_i,\theta _j)}{r_{i+1}-r_i},\quad  D_r^-Z(r_i,\theta _j) :=\frac{Z(r_{i},\theta _j)-Z (r_{i-1},\theta _j)}{r_{i}-r_{i-1}}; \\
2(bD_r^\pm)Z := (b-\vert b \vert) D_r^+Z + (b+\vert b \vert) D_r^-Z; \quad \delta^2_r Z(r_i,\theta _j) :=\frac{D^+_rZ(r_{i},\theta _j)-D_r^-Z(r_i,\theta _j)}{(r_{i+1}-r_{i-1})/2}.\end{eqnarray*}

{\bf Stage 1}: {\it Numerical method for $U_A$ defined over an annular subregion $\tilde A$}

Find $U_A$ such that:
\begin{subequations}\label{discrete-problem-A}
\begin{eqnarray} 
 \tilde L^{N} U_A=  0,\qquad (r_i,\theta_j) \in  \Omega ^{N}_A; \\ \hbox{where} \quad 
 \tilde L^{N} U:= -\frac{\ve}{r_i^2}  \delta ^2 _{\theta}U   -\ve  \delta ^2 _{r}U  +  (\cos (\theta _j )-\frac{\ve}{r_i} ) D^{\pm}_r U  -\frac{\sin (\theta _j )}{r_i}D^{\pm}_\theta U; 
\\
U_A(1,\theta _j) =1,\ { \frac{\pi}{2} -\tau \leq \theta _j \leq \frac{3\pi}{2} +\tau ;} \quad U_A(R,\theta _j) =0,  \ \frac{\pi}{2} \leq \theta _j \leq \frac{3\pi}{2}; \\
(\cos \theta  D^-_r-\frac{\sin \theta}{R}  D^\pm_\theta \bigr) U_A(R,\theta _j) =0,  \quad   \theta _j \in (\frac{\pi}{2} -\tau , \frac{\pi}{2}) \cup  (\frac{3\pi}{2}, \frac{3\pi}{2} +\tau );\\
D^+_\theta U^A (r_i,\frac{\pi}{2}-\tau  )= D^-_\theta U^A (r_i,\frac{3\pi}{2}+\tau )=0, \quad \hbox{for} \quad 1 < r_i < R.
\end{eqnarray}
\end{subequations}
The boundary condition (\ref{discrete-problem-A}d) corresponds to applying the Neumann condition $u_x=0$ at these internal points of $\bar \Omega$. 

In Table \ref{Tab1}, we present the results from applying this numerical method to  problem (\ref{cont-prob}) posed on the sector $ \bar A \subset \Omega$. We display the orders of convergence only  for the region where $x \leq 0$ and we observe global convergence over the parameter range $\ve \in [2^{-20},1]$.  
\begin{table}[ht]
\centering\small
\begin{tabular}{|c| c c c c c c c|}
\hline
\multicolumn{8}{|c|}{$\bar p^{N}_{\epsilon}$}\\[3pt]
\hline$\ve | N$&N=\bf{8}&\bf{16}&\bf{32}&\bf{64}&\bf{128}&\bf{256}&\bf{512}\\[3pt]
\hline
  $\ve =1$     &  0.9855 &    0.9966&    0.9996 &     0.9996&   1.0002 &    1.0001 &   1.0000\\ 
  $2^{-2}$ &  1.0014 &    0.9115 &    0.9643&    0.9867 &    0.9918 &    0.9958 &    0.9979 \\
  $2^{-4}$ & 0.4693 &   0.6789 &    0.7825 &    0.6875 &    0.6950&    0.7541 &    0.9911 \\
 $2^{-6}$  & 0.5832 &   0.7176&    0.7760 &    0.6887 &    0.7691 &    0.7990 &    0.8255 \\
 $2^{-8}$  &  0.7205&   0.7771 &    0.8441 &    0.6893 &    0.7713&    0.7991&    0.8266 \\
 $2^{-10}$ &  0.3249 &    0.7079&    0.9677&    0.8376 &    0.9761 &    0.9231 &    0.8270 \\
 $2^{-12}$ &  0.1131&    0.4659&    0.8732 &    0.9145&    0.8086&    0.9597&    1.0268 \\
 $2^{-14}$ &  0.1242 &    0.5344 &    0.6122&    0.7242 &    0.8605&    0.7995&    0.9612\\
 $2^{-16}$ &  0.1669 &    0.4506 &    0.4704 &    0.6033 &    0.7562&    0.8573 &    0.8242 \\
 $2^{-18}$ &  0.2113&    0.2972 &    0.3714&    0.5046 &    0.6604&    0.7978&    0.8561\\
 $2^{-20}$ &  0.2111&    0.1873&    0.2770 &    0.4167&    0.5625&    0.7077 &    0.8373\\
\hline
$\bar p^N_{R^{20}_\ve}$&  0.2111&    0.1873&    0.2770 &    0.4167&    0.5625&    0.7077 &    0.8373\\
\hline
\end{tabular}
\caption{Computed double-mesh global orders of convergence $\bar p^N_{\ve}$ using the mesh $\tilde \Omega ^{N}_A$ (\ref{fitted-mesh-special})  with $x\le 0$, when applied to  problem (\ref{cont-prob})  confined to the sector $\tilde A$  with $R=4$.}
\label{Tab1}
\normalsize
\end{table}

We next introduce a rectangular mesh, which will be aligned to the internal characteristic layers. We retain the computed solution $\bar U_A$ in the upwind region where  $x \leq 0$ and then solve the  problem (\ref{cont-prob}) over the remaining rectangle
\begin{equation}\label{rectangle}
S:= \{ (x,y) | 0 < x \leq R ,-R \leq y \leq R\},
\end{equation}
 using a piecewise-uniform mesh 
$ \Omega ^N_S$, whose transition parameters are related to the  bounds  (\ref{right}) on the continuous solution $u$.  { By  (\ref{right})}, 
\[
u(x,y) \leq CN^{-1}, \quad \hbox{if} \quad y \geq 1 +2{ \ve ^{1/2}}\ln N.
\]

{\bf The Shishkin mesh $ \Omega ^{N}_S$}
{\it The mesh 
$ \Omega ^N_S:= \omega _u \times \omega _3$ is a tensor product mesh of a uniform mesh $\omega _u $ in the horizontal direction and a Shishkin  mesh $\omega _3$,  which refines in the region of the interior characteristic layers. The mesh $\omega _3$
is generated by splitting the vertical interval $[-R,R]$ into the five subregions
\begin{subequations}\label{Shish-par-mesh}
\begin{equation}  [-R,-1 -\tau_2 ] \cup [-1 -\tau _2, -1 +\tau _1] \cup [-1 +\tau _1, 1-\tau _1] \cup [1-\tau_1, 1+\tau _2] \cup [1+\tau _2, R ],
\end{equation}
 distributing the mesh elements in the ratio $N/8:N/4:N/4:N/4:N/8$ and  
\begin{equation} 
\tau _1  := \min \{ \frac{1}{2},   2\ve ^{1/2} \ln N  \}; \quad 
\tau _2 :=  \min \{ \frac{R-1}{2}, 2 \ve ^{1/2}  \ln N  \} .
\end{equation}
\end{subequations}
}  

 Observe that some of the mesh points in $ \Omega ^{N}_S$ lie within the unit circle, where the value of the continuous solution is known. 

{\bf Stage 2}: {\it Numerical method for $U_B$ defined over  the downwind region $S$:}

  Find  $U_B(x_i,y_j)$  such that
\begin{subequations}\label{discrete-problem-B}
\begin{eqnarray} 
L^{N,M} U_B:= \Bigl( -\ve  \delta ^2 _{x}   -\ve  \delta ^2 _{y}  + D^{-}_x \Bigr)  U_B(x_i,y_j)     = 0 ,  \quad (x_i,y_j) \in \Omega ^N_S \setminus {  C_1};\\
U_B(x_i,y_j)\equiv 1, \quad (x_i,y_j) \in \Omega ^N_S \cap  {  C_1}; \quad C_1:=\{ (x,y) | x^2+y^2 \leq 1\}; 
 \end{eqnarray} with the remaining boundary values computed from the equations 
\begin{eqnarray} 
D^-_x  U_B(R,y_j)=0,  \ -R < y_j < R,\quad U_B(x_i,-R)= U_B(x_i,R)=0, \ x_i \in [0,R]; \\ U_B(0,y_j) = \bar U_A (0,y_j),\quad  y_j \in (-R, R) \setminus [-1,1].
\end{eqnarray}\end{subequations}
Here $\bar U_A (0,y_j)$ is a linear interpolant of the values $U_A (r_i,\theta _j)$ along the line $x=0$.

The initial computed  global approximation $\bar U_1^N$ to the solution  of problem (\ref{cont-prob})  is:
\begin{equation}\label{initial-approx}
\bar U_1^N (x,y) := \left \{
\begin{array}{ll}
\bar U_A(r,\theta ), & \hbox{for} \quad (r,\theta ) \in \overline {\tilde A} \setminus \{ x\geq 0 \} \\
\bar U_B (x,y), & \hbox{for} \quad (x,y) \in \bar S \setminus {  C_1}, 
\end{array}\right.
\end{equation}
where $U_A$ is defined by (\ref{discrete-problem-A}) and $U_B$ is defined by (\ref{discrete-problem-B}). 
\begin{figure}\center{
\includegraphics[width=0.5\textwidth,height=0.5\textwidth]{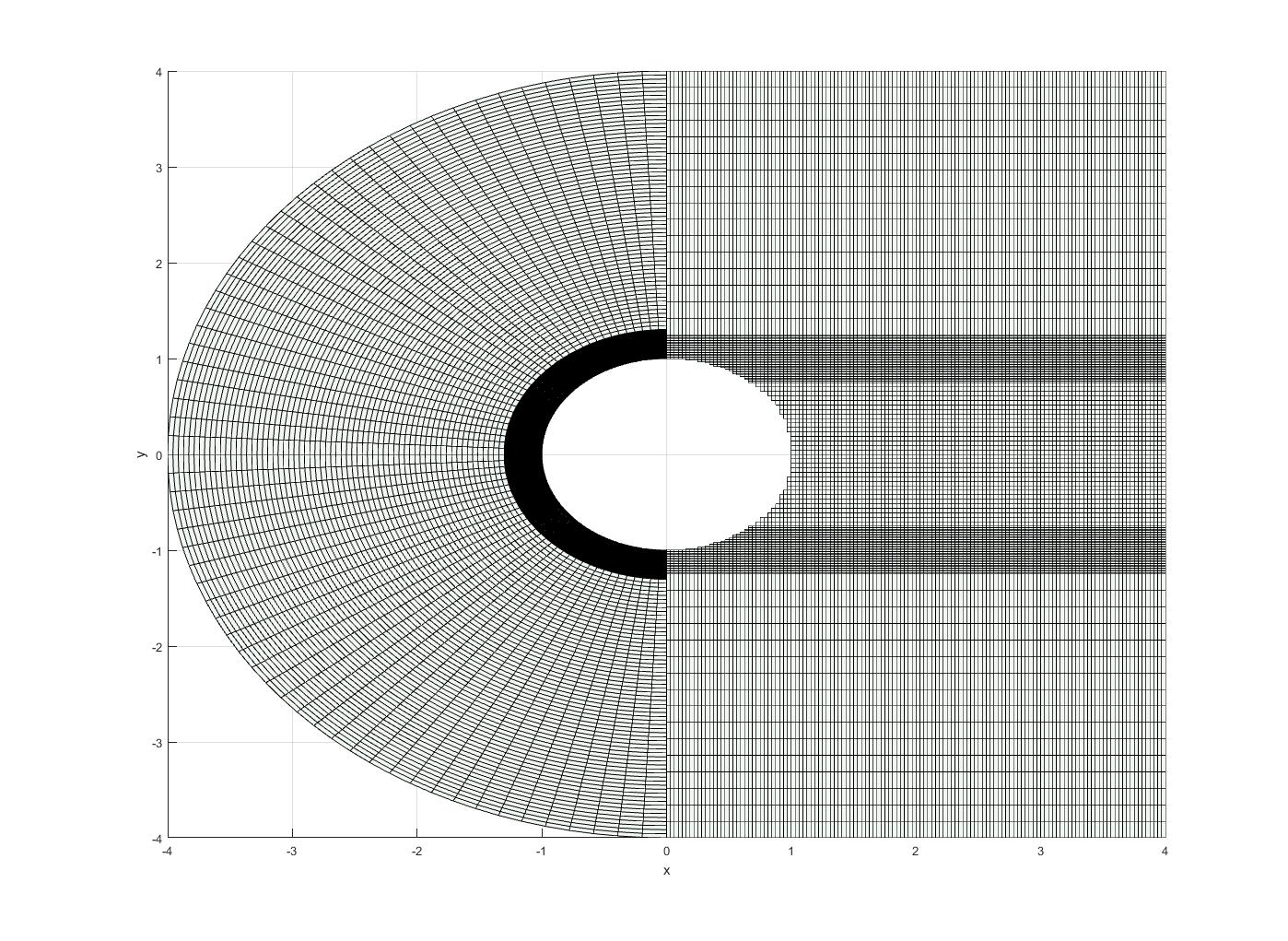}}
\caption{A schematic image of the composite  mesh 
{ $(\tilde \Omega _A^{N} \cup  \Omega ^N_S) \setminus {  C_1}$, defined by }(\ref{fitted-mesh-special}) and (\ref{Shish-par-mesh}) }\label{Fig.2} 
\end{figure}

In Table \ref{Tab2},  we do not observe  convergence of these initial numerical approximations $\bar U_1^N$.  Hence, although we observe convergence in the annulus up to $x \leq 0$, this does not suffice to generate convergence across the entire domain, even if the mesh is fitted to the characteristic layers. 
In Figures \ref{Spike-N128}, \ref{Spike-N256}   we plot the  error across the entire domain and we observe 
a spike in the global pointwise error in the vicinity of the characteristic points. This global error does not decrease when the mesh is refined. 

 We next describe the construction of a  correction $\bar U_2^N$ to the initial approximation $\bar U_1^N$. 

 \begin{table}[ht]
\centering\small
\begin{tabular}{|c| c c c c c c c|}
\hline
\multicolumn{8}{|c|}{$\bar p^{N}_{\epsilon}$}\\[3pt]
\hline$N$&\bf{8}&\bf{16}&\bf{32}&\bf{64}&\bf{128}&\bf{256}&\bf{512}\\[3pt]
\hline
 $\ve =1$     &   1.1831 &   0.5014&    0.7157   & 0.7719&    0.8259&    0.8743&    0.9100 \\
  $2^{-2}$ &    1.7592  &  0.0305 &   0.4492 &   0.6880  &  0.7791  &  0.8460    &0.8944\\
  $2^{-4}$ &    1.0764 &   1.0413   & 0.0241  &  0.4368  &  0.6719  &  0.7768 &   0.8486\\
  $2^{-6}$ &    0.2376  &  0.7503   & 1.7967  & -0.9041  &  0.7151&   0.4173  &  0.7648\\
  $2^{-8}$ &   -0.0726  &  0.3699   & 0.6842  &  0.1372 &   0.8439  & -0.0165  &  0.6041\\
  $2^{-10}$ &   -0.0700  &  0.0504 &   0.4604  &  1.0622 &   0.4357 &   0.2011 &   0.4706\\
  $2^{-12}$ &  -0.1139&   -0.0051  & 0.2692 &   0.8506   & 1.0414 &  -0.0081  &  0.3631\\
   $2^{-14}$  &-0.1508 &  -0.0405   & 0.1558   & 0.5954  &  1.2268   & 0.1912   & 0.2453\\
  $2^{-16}$&  -0.1680   &-0.0685   & 0.0477 &   0.3999  &  1.0007   & 0.7912  &  0.0780\\
  $2^{-18}$ &   -0.1686   &-0.0873&   -0.0159  &  0.2119  &  0.7662   & 1.3778  & -0.0380\\
  $2^{-20}$ &     -0.1690  & -0.0922 &  -0.0467  &  0.0795   & 0.5156   & 1.2055   & 0.5538\\
\hline
\end{tabular}
\caption{Computed double-mesh global orders of uniform convergence $\bar p_\ve^{N}$,  for the  mesh $\tilde \Omega ^N_A$ (\ref{fitted-mesh-special}) used up to $x=0$ and subsequently combined with the rectangular  mesh $\Omega ^N_S$ (\ref{Shish-par-mesh}) }
\label{Tab2}
\normalsize
\end{table}

 \begin{figure}[h!] \centering
		\includegraphics[scale=0.18, angle=0]{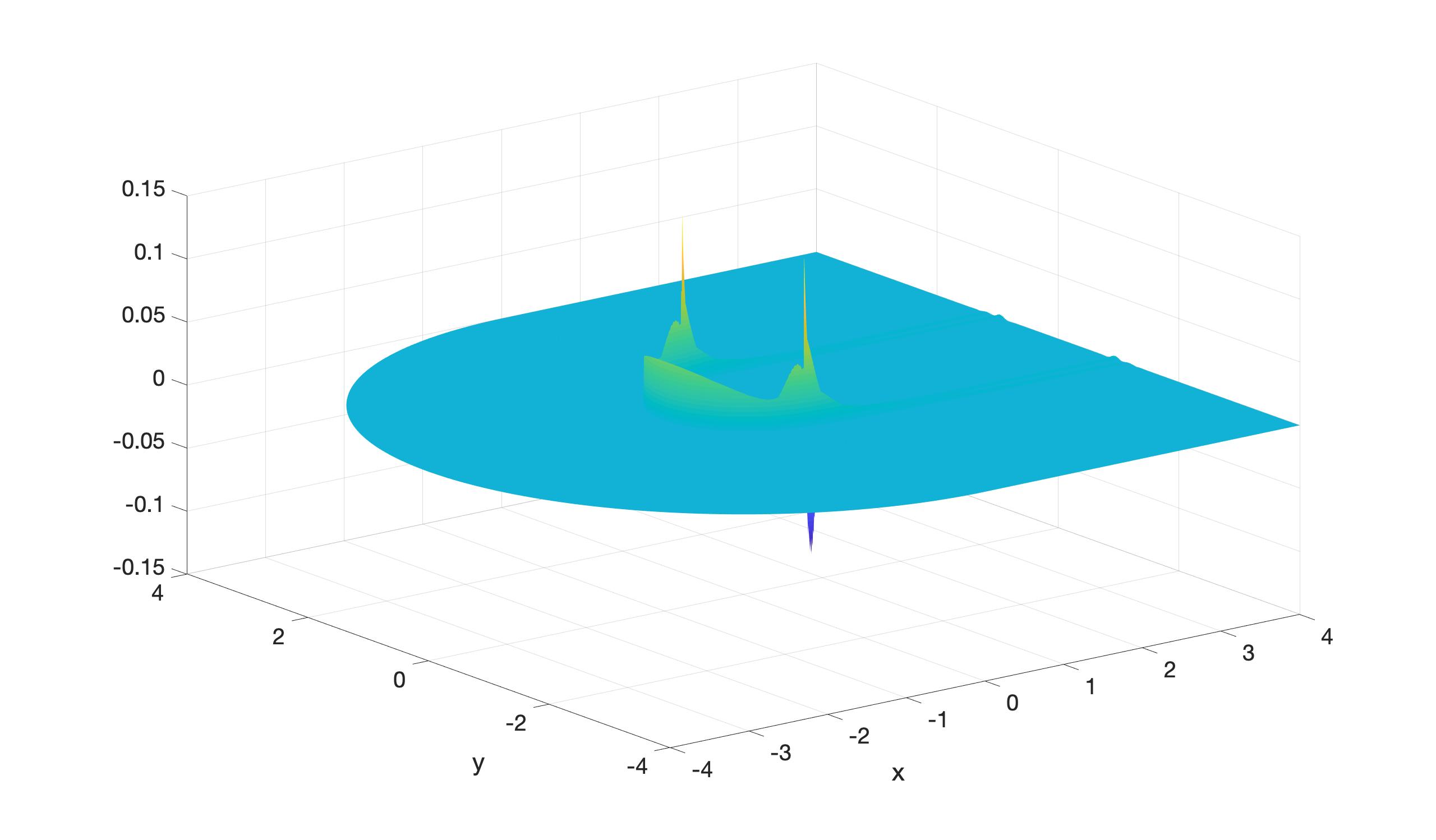}		
\caption{Approximate global error $ \bar U_1^{128}-\bar U_1^{2048}$ on the mesh $\tilde \Omega _A^N  \cup  \Omega ^N_S$, (\ref{fitted-mesh-special}) and (\ref{Shish-par-mesh})  with $ N=2048$, for $\ve =2^{-10}$}
	\label{Spike-N128}
\end{figure}
\begin{figure}		
\includegraphics[scale=0.18, angle=0]{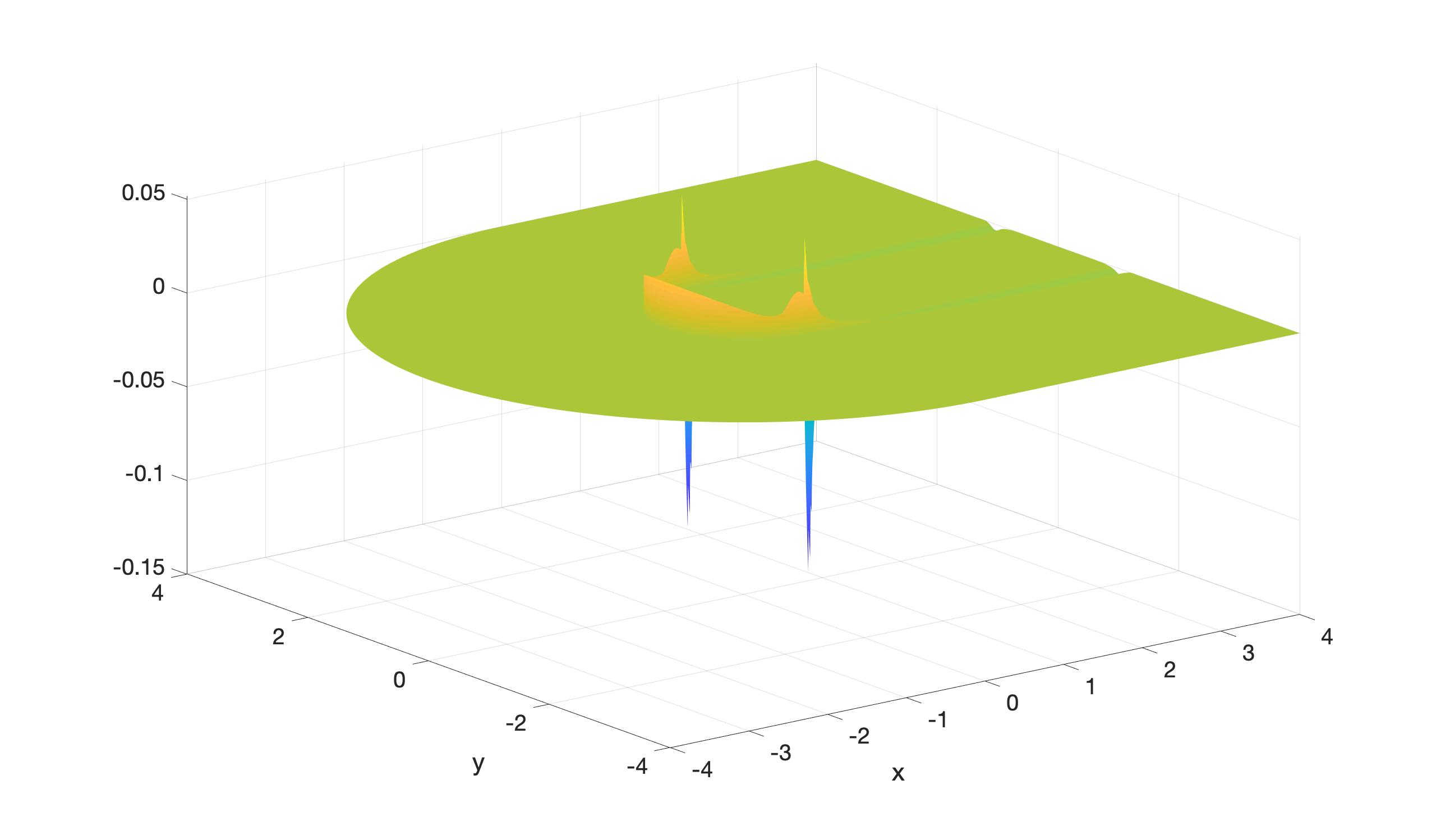}				
	\caption{Approximate global error $ \bar U_1^{256}-\bar U_1^{2048}$ on the mesh $\tilde \Omega _A^N \cup  \Omega ^N_S$, (\ref{fitted-mesh-special}) and (\ref{Shish-par-mesh})  with $ N=2048$, for $\ve =2^{-10}$}
	\label{Spike-N256}
 \end{figure}

Let us return to the bounds on the continuous solution given in Theorem \ref{main}. 
From  (\ref{center}), the bound on the solution remains constant along the parabolic path $y-1=-\frac{1}{2} x^2$. This is the motivation to introduce a third co-ordinate system  that is aligned to these parabolic curves.  Under this transformation, a mixed derivative term will appear in the transformed elliptic operator and we are required to restrict the dimensions of the sub-domain (where this new transformation is utilized) in order to preserve inverse-monotonicity of the corresponding discrete operator.

We introduce a patched region $Q:=Q^+\cup Q^-$, in a neighborhood of the vertical line $x=0$. 
We discuss the approach on the upper region 
\begin{equation}\label{defQ}
Q^+:= \{ (x,y) \vert y=t-x^2/2,\quad  0< x < L  <1,\ 1- \tau _3  < t < 1+3\delta \};
\end{equation}
with an analogous definition of the lower region $Q^-$.
The width $L$  and height $ 3\delta +\tau_3$ of this strip will be specified later in order to retain stability of the discrete operator. 

A natural coordinate system for this patched region $Q^+$ is
\[
s=x, \quad t= y+x^2/2, \qquad (y = t-s^2/2).
\]
Then let $\hat u(s,t) := u(x,y),  \hat Q^+ := (0,L) \times (1- \tau _3 ,1+3\delta )$ and 
\begin{eqnarray*}
u_x = \hat u _s +s \hat u_t, \ u_y= \hat u _t, \ u_{yy}=\hat u_{tt}, \quad
u_{xx} = \hat u_{ss} +2s \hat u_{ts} + s^2 \hat u_{tt} + \hat u_t.
\end{eqnarray*}
Under this transformation the problem (\ref{cont-prob}) on the patched region 
can be specified as follows
\begin{eqnarray*}
-\ve (\hat u_{ss} +2s \hat u_{ts} + (1+s^2) \hat u_{tt}) + \hat u _s + (s -\ve)\hat u _t = 0, (s,t) \in { \overline{ \hat Q^+}}\setminus   C_1;\\
  \hat u (s,t) =1, \quad \hbox{if} \quad s^2 + (t-s^2/2)^2 \leq 1.
\end{eqnarray*} 
 We will use the initial computed  approximation $U_1^N$ (\ref{initial-approx}) as boundary values to solve on the patched region $\hat Q^+$ as follows:
\begin{eqnarray*}
\hat L \hat u := -\ve (\hat u_{ss} +2s \hat u_{ts} + (1+s^2) \hat u_{tt}) +  \hat u _s + ( s -\ve)\hat u _t = 0,\quad  (s,t) \in \hat Q^+\setminus   C_1, \\
{ \hat u(s,1-\tau _3) =1}, \qquad  \hat u(s,1+3\delta) = \bar U_1(s,1+3\delta), \ s \in  (0,L), \\
\hat u(0,t) =\bar U_1(0,t),\qquad  \hat u_s (L,t) = 0,\ t \in  [1- \tau _3,1+3\delta ], \\
 \hat u(s,t) =1,\quad \hbox{if} \quad s^2 +(t-s^2/2)^2 \leq 1. 
\end{eqnarray*}
We  again note the use of a Neumann boundary condition   at the artificial internal boundary $s=L$.
To ensure that the corner point $(L, 1-\tau_3)$ of this patch $Q^+$ lies within the inner boundary ($ r <1$) of problem (\ref{cont-prob}), we require that
\[
-\frac{1}{2}L^2+1-\tau _3 < \sqrt{1-L^2} ;
\]
which, in turn,  requires that
\begin{equation}\label{constraint}
\tau _3   >  \frac{L^2}{2}\frac{1-\sqrt{1-L^2}}{1+\sqrt{1-L^2}}  =\frac{(1-\sqrt{1-L^2})^2}{2}.
\end{equation}

We now specify the next phase of our numerical algorithm, where we correct the initial approximation $\bar U_1$.

We numerically solve the problem on the patch $\hat Q^+$, in the $s,t$ coordinate system, using a Shishkin mesh $ \hat \Omega ^{N,M}_P$ with two transition points located at $t=1+\tau _3$ and $1 + \tau_3 +\tau _4$.
The choice for the transition parameters $\tau _3, \tau _4$  is motivated by the bounds  (\ref{center}) and (\ref{right}), written in the $(s,t)$ coordinates  
\[
\hat u (s,t) \leq C \min \{ e^{-\frac{t-1}{3\ve ^{2/3}}},  e^{-\frac{t-1}{\sqrt{\ve}}} \}, \quad \hbox{for} \quad s \in \ve ^{1/3} [-1,1].
\] 

{\bf The Shishkin mesh $\hat \Omega ^{N,M}_P$}
{\it
 We use a uniform mesh in the $s$-direction and a Shishkin mesh in the $t$-direction. 
The vertical strip $[1-\tau _3 ,1+3\delta]$  is split into the following four sub-regions
\[
 [1-\tau _3,1] \cup [1,1+\tau_3] \cup [1+\tau_3, 1+(\tau_3+\tau_4)] \cup [1+(\tau_3+\tau_4),1+3\delta]
\]
where
\begin{equation} \label{tau-3}
\tau _3 := \min \{ \delta , 3{\ve} ^{2/3} \ln M \}, \qquad \tau _4 := \min \{ \delta , 2\sqrt{\ve}\ln M \}, 
\end{equation}
and $M/4$ mesh elements are distributed uniformly within each of these sub-intervals.
}

Within this patched region, we adopt the following notation
\[
 h = N^{-1}L, \quad k \leq k_j:= t_j-t_{j-1} \leq K; \quad 2\bar k_j = k_j+k_{j+1}, \]
where, as $\tau _3 \leq \tau _4$, we have 
\[
 4M^{-1}  \tau_3  \leq k \leq K \leq   12 \delta M^{-1}.
\]
Note that if we assume that
\begin{equation} \label{M-and-N}
 12\delta \leq MN^{-1},
 \end{equation}
then the maximum mesh step $K$ in the vertical direction will be such that $K \leq N^{-1}$.

{\bf Stage 3}: {\it Numerical method for $U_C$ defined near the characteristic points}

Find $U_C$ such that:
\begin{subequations}\label{discrete-problem-transformed}
\begin{eqnarray} 
\hat L^{N,M}  \hat U_C:=
-\ve {\cal{L} }^{N,M}  \hat U_C +   D^{-}_s \hat U_C +(s_i-\ve) D^{\pm}_t \hat U_C   =0,\quad (s_i,t_j) \in  \hat \Omega ^{N,M}_P; \\
\hbox{where} \quad
  {\cal{L}} ^{N,M}  Y := \delta ^2_{ss} Y + 2s\delta _{st} Y +(1+s^2)\delta ^2_{tt} Y;\quad  2\delta _{st} := D^-_tD^-_s +D^+_tD^+_s 
	\label{patch-diff-op}
\end{eqnarray}
and for the remaining mesh points
\begin{eqnarray} 
\hat U_C(0,t_j)=\bar U_1(0,t_j), \quad D^-_s \hat U_C (L,t_j) = 0, \quad  1- \tau _3 \leq t_j \leq 1+3\delta ; \\
 \hat U_C(s_i,1-\tau _3)=1,\quad \hat U_C(s_i,1+ 3\delta ) = \bar U_1(s_i,1+ 3\delta ), \ 0 < s_i < L; \\
 \hat U_C(s_i,t_j)=1,\quad \hbox{if} \quad s_i^2 +(t_j-s_i^2/2)^2 \leq 1.
\end{eqnarray}
\end{subequations}
The presence of the mixed derivative term $\ve s \hat u _{st}$ in the transformed problem, creates the danger of loss of stability in the discretization of the differential operator $\hat L$ \cite{ray09}. However, by restricting the dimensions of this parabolic patch, we are able to preserve an appropriate sign pattern in the system matrix elements, so that the matrix ${\cal{L} }^{N,M}$ is an $M$-matrix. 

\begin{theorem} If  we choose the dimension  of the patch $\hat Q^+$ (\ref{defQ}) to satisfy 
\begin{equation}\label{def-L}
L \leq L_*:= 2\sqrt{NM^{-1}\tau _3} \quad \hbox{and} \quad 12 \delta \leq MN^{-1}
\end{equation}
then the finite difference operator $\hat L^{N,M}$ (\ref{discrete-problem-transformed}) satisfies a discrete comparison principle. That is, for any mesh function $Z$, 
\begin{eqnarray*}
\hbox{if} \quad Z(s_i,1-\tau _1) \geq 0, Z(s_i,1+3\delta) \geq 0, Z(0, t_j) \geq 0, \quad D^-_sZ(L, t_j) \geq 0, \ \forall (s_i,t_j) \in \bar Q^+ \\
\hbox{and} \quad \hat L^{N,M}Z(s_i, t_j) \geq 0, \qquad \forall (s_i,t_j) \in Q^+,
\end{eqnarray*}
then $Z(s_i, t_j) \geq 0, \quad \forall (s_i,t_j) \in \bar Q^+$.\end{theorem}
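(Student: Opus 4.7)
The plan is a standard discrete comparison principle argument: verify that the system matrix associated with $\hat L^{N,M}$ is an $M$-matrix on the interior of $\hat Q^+$, and then conclude non-negativity of $Z$ from a proof by contradiction at the minimum mesh point. All of the substance lies in verifying the sign pattern, because the transformed operator $\hat L$ carries the mixed derivative term $-2\varepsilon s\,\hat u_{st}$, whose discretization breaks the standard $5$-point sign pattern of $-\varepsilon\Delta + \partial_s$.

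First I would write out the stencil of $\hat L^{N,M}$ at an interior node $(s_i,t_j)$. The terms $-\varepsilon \delta^2_{ss}$, $-\varepsilon (1+s_i^2)\delta^2_{tt}$, $D^-_s$ and the upwinded $(s_i-\varepsilon)D^{\pm}_t$ all contribute non-positive entries off the diagonal and a positive entry on the diagonal, in the standard way. The only problematic term is $-2\varepsilon s_i \delta_{st}$. Expanding $2\delta_{st} = D^-_tD^-_s + D^+_tD^+_s$, the corner (diagonal) neighbours $(s_{i-1},t_{j-1})$ and $(s_{i+1},t_{j+1})$ receive negative contributions from this term, which is harmless. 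The dangerous contributions are the positive coefficients at $(s_{i-1},t_j)$, $(s_i,t_{j-1})$, $(s_{i+1},t_j)$ and $(s_i,t_{j+1})$, which are of size $\varepsilon s_i / (h\,k_j)$ and $\varepsilon s_i /(h\,k_{j+1})$ respectively.

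Second, I would show that these bad positive entries are absorbed by the negative entries coming from $-\varepsilon\delta^2_{ss}$ and $-\varepsilon (1+s_i^2)\delta^2_{tt}$. For example, at $(s_{i-1},t_j)$, the $-\varepsilon\delta^2_{ss}$ term contributes of order $-\varepsilon/h^2$, so non-positivity of the total entry reduces to $\varepsilon s_i/(h k_j) \le \varepsilon/h^2$, i.e. $s_i\,h \le k_j$. Using $s_i \le L$, $h = L/N$, and the lower mesh spacing $k_j \ge 4 M^{-1}\tau_3$ on $\Omega^{N,M}_P$, this becomes $L^2 \le 4 N\tau_3/M$, which is exactly the condition $L \le L_* = 2\sqrt{N M^{-1}\tau_3}$ in (\ref{def-L}). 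A symmetric calculation at $(s_i,t_{j\pm 1})$ uses the $-\varepsilon(1+s_i^2)\delta^2_{tt}$ term, and the condition $12\delta \le MN^{-1}$ controls the maximum vertical step $K \le N^{-1}$, so that the analogous inequality $s_i/h \le (1+s_i^2)/K$ holds uniformly. The diagonal is automatically positive (as a sum of magnitudes of the off-diagonal contributions plus the upwind term), and the operator applied to the constant $1$ is non-negative, which supplies the weak diagonal dominance needed for $\mathcal{L}^{N,M}$ to be an $M$-matrix.

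Third, with the $M$-matrix property in hand, I would run the standard argument. Suppose $Z$ attains a strictly negative minimum at some interior point $(s_i,t_j)$. Expressing $\hat L^{N,M} Z(s_i,t_j)$ as the diagonal coefficient times $Z(s_i,t_j)$ plus non-positive coefficients times $(Z(s_i,t_j) - Z(\text{neighbour}))\le 0$ differences yields $\hat L^{N,M}Z(s_i,t_j) < 0$, contradicting the hypothesis. The boundary portions are handled by the given sign conditions on $Z$ at $t=1-\tau_3$, $t=1+3\delta$, $s=0$, together with the Neumann condition at $s=L$: if the minimum were at $(L,t_j)$, then $D^-_s Z(L,t_j)\ge 0$ combined with $Z(L,t_j)$ being a minimum forces $Z(s_{N-1},t_j) \le Z(L,t_j)$, so one can shift attention to an interior minimizer along that row, or equivalently replace the $s$-derivatives in the stencil by their one-sided boundary form and re-check the sign pattern. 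I expect the main obstacle to be bookkeeping the non-uniform mesh steps in $t$ carefully enough to obtain the inequality $s_i h \le k_j$ in the worst case (the transition nodes of the Shishkin mesh $\omega^{N,M}_P$ where $k_j$ jumps), and tracking that the coefficient $(1+s_i^2)$ genuinely helps rather than hurts in the $\delta^2_{tt}$ estimate; once those two inequalities are secured, the rest of the proof is a routine $M$-matrix argument.
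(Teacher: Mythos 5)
Your proposal is correct and takes essentially the same route as the paper's proof: both isolate the four edge-neighbour stencil entries that pick up positive contributions of size $\ve s_i/(hk_j)$ and $\ve s_i/(hk_{j+1})$ from the discretized mixed derivative, absorb the horizontal ones via $L\le L_*$ (i.e.\ $Lh\le k$) and the vertical ones via $K\le N^{-1}$ (from $12\delta\le MN^{-1}$), and then conclude with the standard $M$-matrix / discrete minimum principle argument. The paper is, if anything, terser than you on the diagonal-dominance and Neumann-row bookkeeping, simply citing the $M$-matrix criterion once the sign pattern is established.
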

\begin{proof}
Let us  examine  the sign patterns of the second order operator ${\cal{L} }^{N,M}$ (defined in (\ref{patch-diff-op})).  From assumption (\ref{M-and-N}), we have that
\[
1-\frac{\bar k_j  s_i }{h} \geq 1- \frac{KL}{h} \geq 1- \frac{K}{2N} \geq 0.
\]
Using this, we see that for the internal mesh points
\[
-{\cal{L} }^{N,M} Y(s_i,t_j) = \sum _{n=j-1}^{j+1} \sum _{k=i-1}^{i+1} a_{kn} Y(s_k,t_n);
\]
where the sign of most of the coefficients $a_{kn}$  is  easily identified to be 
\begin{eqnarray*}
a_{i+1,j-1} =a_{i-1,j+1} =0;\quad
a_{i-1,j-1} = - \frac{s_i}{k_jh} \leq 0 &;&
a_{i+1,j+1} = -\frac{s_i}{k_{j+1}h} \leq 0; \\
a_{i,j} =\frac{2}{h^2} +\frac{2}{k_jk_{j+1}} \bigl(1+s_i^2 -  
 \frac{\bar k_j \vert s_i \vert)}{h} \bigr) &&>0; \\
a_{i,j-1} = - \frac{(1+s_i^2)}{\bar k_j k_j}+  \frac{s_i }{k_jh}=
 -\frac{1}{\bar k_jk_j} \bigl(1-\frac{\bar k_j  s_i }{h} +s_i^2\bigr) < 0 &;& a_{i,j+1} = -\frac{(1+s_i^2)}{\bar k_jk_{j+1}} +  \frac{ s_i }{hk_{j+1}} < 0.
\end{eqnarray*}
Finally, we look at the last two terms, 
\[
a_{i-1,j} =a_{i+1,j} =-\frac{1}{h^2} + \frac{s_i}{hk_j}, \quad a_{i+1,j} =-\frac{1}{h^2} + \frac{s_i}{hk_{j+1}}.
\]
Observe that
\[
\max \{a_{i-1,j}, a_{i+1,j} \} \leq  - \frac{1}{h^2} \bigl(1-\frac{Lh}{k}\bigr)
\leq- \frac{1}{h^2} \bigl(1-\frac{ML^2}{ 4N\tau _3}\bigr) \leq 0,
\]
if we choose $L$ such that (\ref{def-L}) is satisfied. This sign pattern on the matrix elements insures that the system matrix associated with the finite difference scheme is an M-matrix
\cite[pg.19]{fhmos}, which suffices to establish the result.  
\end{proof}

 The constraints in (\ref{def-L}) and (\ref{constraint}) are all satisfied if
\[ 
M=N, \ L^2 = 4 \tau _3 \quad \hbox{and} \quad \tau _3 \leq \delta \leq \frac{1}{12}, 
\]
as $2x > (1-\sqrt{1-4x})^2$ for $0 < x  < \frac{2}{9}$. 

In the final  phase, we solve the following discrete problem over the rectangle 
\begin{equation}\label{downwind}
 S^*:= (L_*,R) \times (-R, R) \subset S,\qquad L_* :=2 \sqrt {NM^{-1} \tau _3}; 
\end{equation}
 using the mesh $ \Omega ^N_S$, which was  defined in (\ref{Shish-par-mesh}). 

{\bf Stage 4}: {\it Numerical method for $U_D$ defined over the downwind region $S^*$}

 Find  $U_D$  such that 
\begin{subequations}\label{final-scheme}
\begin{eqnarray} 
L^{N,M}U_D(x_i,y_j)     = 0 ,  \ (x_i,y_j) \in  \Omega ^N_S \setminus \bar C_1; 
\\
 U_D (L_*,y_j) =\left \{ \begin{array}{ll} \bar U_C (L_*,y_j), \  &y_j \in (-1-3\delta, 1+3\delta); \\
 \bar U_1 (L_*,y_j),\ &y_j \in [-R,-1-3\delta] \cup  [1+3\delta, R],\ \delta  \leq M/(12N);
\end{array}\right. \\
D^-_x  U_D (R,y_j)=0,  \ -R < y_j < R,\quad  U_D(x_i,-R)= U_D(x_i,R)=0, \ x_i \in [L_*,R] . 
\end{eqnarray}\end{subequations}

Then our corrected numerical approximation is given by
\begin{equation}\label{corrected-approx}
\bar U_2^N (x,y) := \left \{
\begin{array}{lll}
\bar U_1(x,y), & \hbox{for} \quad (x,y) \in \bar \Omega  \setminus (\{ x  \geq L^* \} \cup Q^+\cup Q^-) \\
\bar U_C(x,y) , & \hbox{for} \quad (x,y) \in  Q^+\cup Q^-, \\
\bar U_D(x,y) , & \hbox{for} \quad (x,y) \in (\{ x  \geq L^* \} \cap \Omega )\setminus (Q^+\cup Q^-)
\end{array}\right. .
\end{equation}
In the next section, we present some numerical results to illustrate the convergence properties of this corrected approximation, which is defined  across  three different coordinate systems. A schematic image of the composite mesh 
 $\tilde \Omega _A^{N,M} \cup \hat \Omega _P^{N,M}\cup \Omega _S^{N,M}$ is presented in Figure \ref{Fig.5}.
\begin{figure} 
\center{\includegraphics[scale=0.15, angle=0]{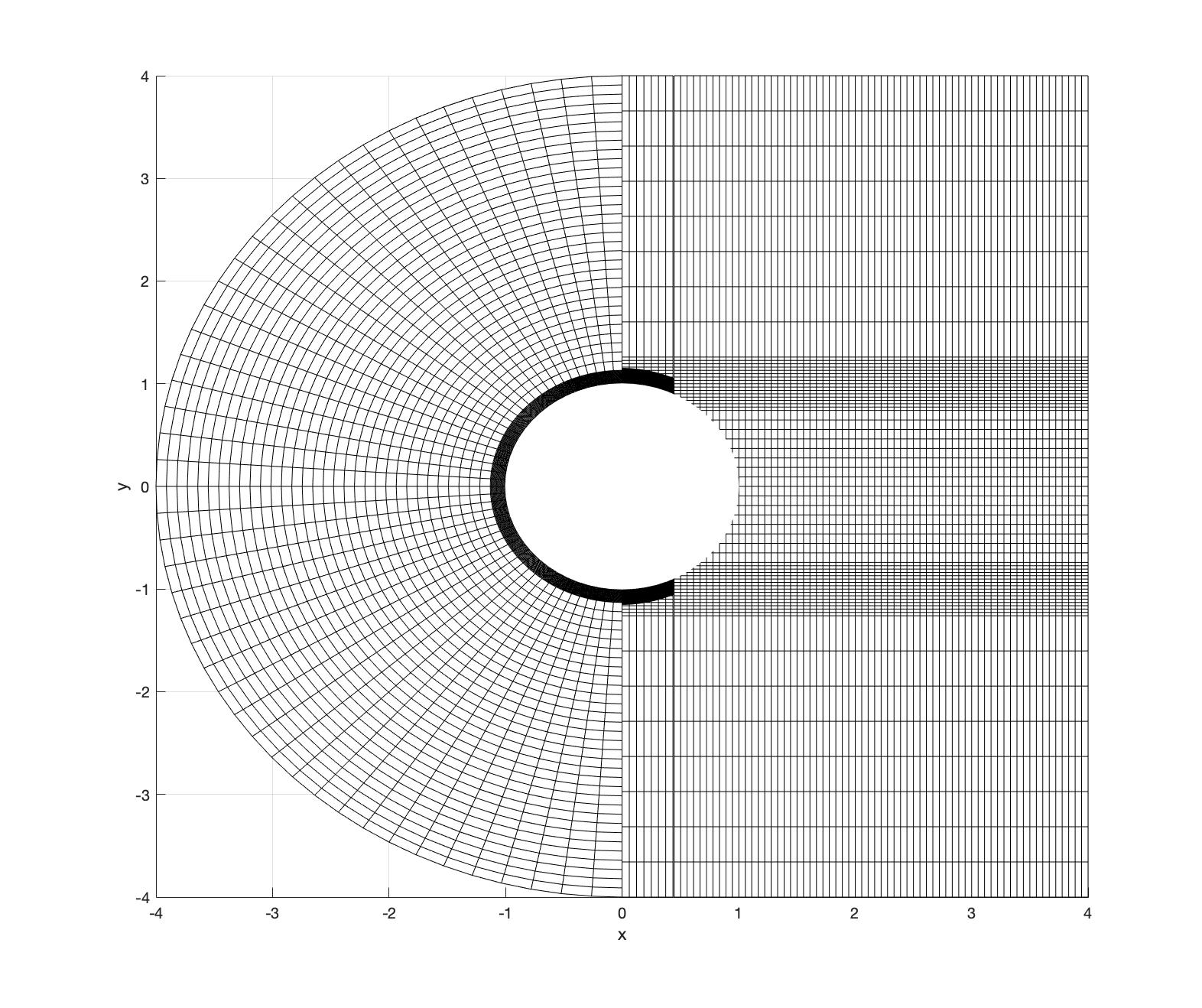}}
\caption{A schematic image of the composite mesh  $\tilde \Omega _A^{N,M} \cup \hat \Omega _P^{N,M}\cup \Omega _S^{N,M}$}
\label{Fig.5}
\end{figure}

\section{Numerical results}
 
In the previous section, we have seen that the initial approximations $\bar U_1^N$ displayed a lack of convergence, due to the presence of  large errors in the neighbourhood of the characteristic points. The corrected approximations $\bar U_2^N$ incorporate a parabolic patch near these points.  In the numerical experiments in this section, we have taken $M=N,\ \delta =0.05$ and, for ease of generating Tables, we have simply taken
$L=  2 \sqrt{\min \{ \delta ,  \ve^{2/3}  \ln 2048 \}} $ in the patched region. 
 When we include the patch,  we  observe convergence  in Table \ref{Tab3} of the corrected approximations over this parabolic patch $\Omega _P^N$ over an extensive range of $\ve $ and $N$. 
In Table \ref{Tab4},  the global orders of convergence over the entire domain for the corrected approximation $\bar U_2^N$ are  given.  These orders indicate that the corrected approximations are converging for all values of $\ve \in [2^{-20},1]$. In the final Table \ref{Tab5}, the approximate global errors over the entire domain are displayed for all $\ve \in [2^{-20},1]$.
We observe that as $\ve \rightarrow 0$ the global errors continue to grow for each fixed $N$. Hence, the method appears  not to be parameter-uniform. Nevertheless, for any fixed value of $\ve$ we do observe convergence as $N $ increases. In particular, 
we see in Figures \ref{Fig.8} and  \ref{Fig.9}  that for the corrected approximations  $\bar U_2^N$, the approximate global errors $ \bar U_2 -\bar U_2^{2048}$ essentially halve as the number of mesh points are doubled. This in sharp contrast to the approximate global errors $\bar U_1 -U_1^{2048}$ displayed in  Figures \ref{Spike-N128}, \ref{Spike-N256}.

 \begin{figure}[h!] \centering
		\includegraphics[scale=0.18, angle=0]{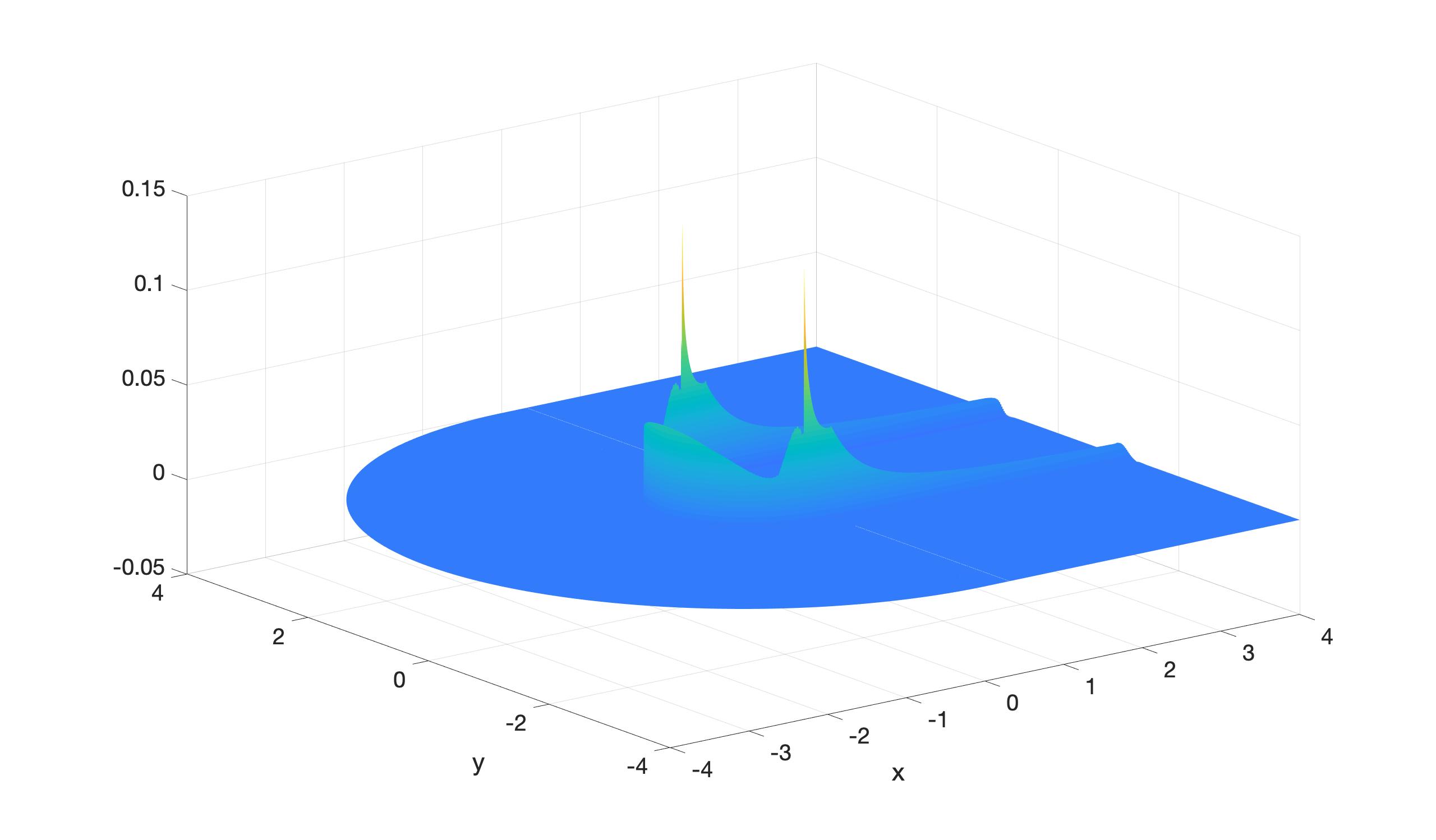}		
\caption{Approximate global error $ \bar U_2^{128}-\bar U_2^{2048}$  in corrected approximation  for $\ve =2^{-10}$ }
	\label{Fig.8} 
\end{figure}
\begin{figure}
\includegraphics[scale=0.18, angle=0]{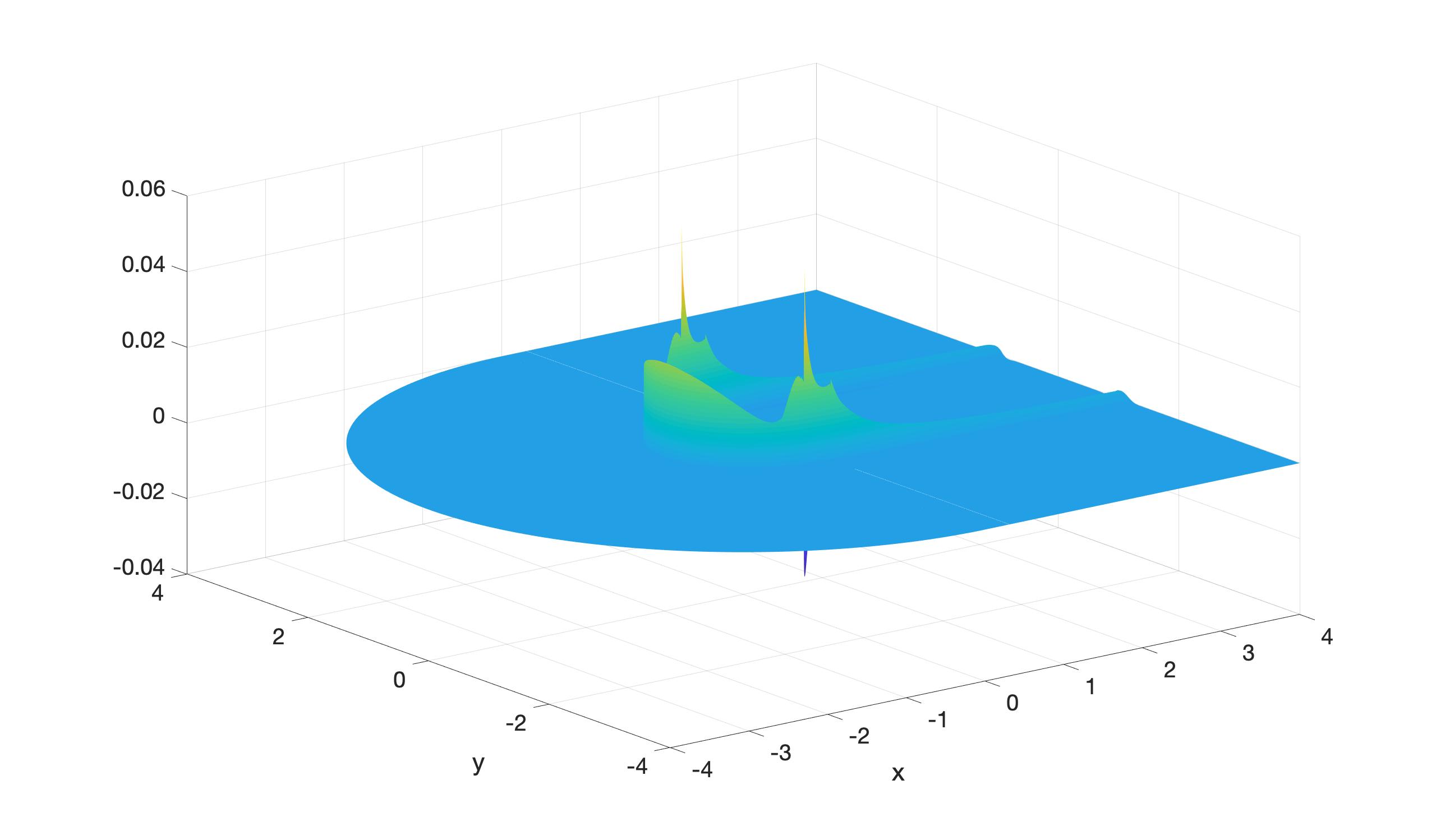}			
\caption{Approximate global error $ \bar U_2^{256}-\bar U_2^{2048}$  in corrected approximation for $\ve =2^{-10}$   }
	\label{Fig.9} 
 \end{figure}

\begin{table}[ht]
\centering\small
\begin{tabular}{|r|rrrrrrr|}\hline 
$\varepsilon| N$&$N=8$& 16&32&64&128&256&512\\ \hline 
 $1$ &   1.4739 &   0.3521 &   0.6916  &   1.1128  &   0.7936 &   0.9664 &   0.9716\\ 
 $2^{-2}$ &   1.7825 &   0.2514 &   0.4127  &   1.1106  &   0.6181 &   0.9426 &   0.9508\\ 
 $2^{-4}$&   1.4364 &   0.6690 &   0.4384  &   0.8073  &   0.6223 &   0.8619 &   0.4013\\ 
 $2^{-6}$ &   0.2310 &   0.7570 &   1.5302  &  -0.6245  &   2.0571 &   0.5677 &   0.2180\\ 
 $2^{-8}$&  -0.2532 &   0.3617 &   0.8478  &  -0.0286  &   1.1685 &   1.2720 &   0.1293\\ 
 $2^{-10}$ &  -0.0002 &   0.0086 &   0.4198  &   1.0663  &   1.6521 &   1.2344 &   0.8949\\ 
 $2^{-12}$&  -0.0208 &  -0.0051 &   0.1917  &   0.8547  &   1.4528 &   1.2501 &   1.0148\\ 
 $2^{-14}$&   0.1888 &  -0.0509 &   0.1504  &   0.5917  &   1.2201 &   1.3731 &   1.0606\\ 
 $2^{-16}$ &   0.3670 &  -0.0685 &   0.0477  &   0.3999  &   1.0007 &   1.6196 &   1.1642\\ 
 $2^{-18}$ &   0.3904 &  -0.0179 &  -0.0203  &   0.2101  &   0.7656 &   1.4258 &   1.3736\\ 
 $2^{-20}$ &   0.7398 &  -0.2868 &  -0.0467  &   0.0795  &   0.5156 &   1.2055 &   1.6702\\ \hline 
$\bar p^N_{R^{20}_\ve}$  &   0.3593 &   0.0521 &  -0.0351  &   0.1095 &   0.5156 &   1.2055  &   1.6702\\ \hline
\end{tabular}
\caption{ Computed double-mesh global orders of convergence $\bar p^{N}_{\epsilon}$ for the corrected approximations $\bar U_C$ (\ref{discrete-problem-transformed}) measured over the patched region  $Q$ (\ref{defQ}), where $L= 2 \sqrt{\min \{ \delta ,  \ve^{2/3}  \ln 2048 \}} $ and  $\delta =0.05$.}
\label{Tab3}
\normalsize
\end{table}

\begin{table}[ht]
\centering\small
\begin{tabular}{|r|rrrrrrr|}\hline 
$\varepsilon| N$&$N=8$& 16&32&64&128&256&512\\ \hline 
$1$ &   2.0261 &   0.6227 &   0.2352  &   0.7464  &   0.8763 &   0.9448 &   0.9799\\ 
$2^{-2}$ &   2.0506 &   1.0501 &   0.3720  &   0.3373  &   0.8437 &   0.9907 &   0.4472\\ 
$2^{-4}$  &   1.9773 &   0.1282 &   1.9129  &   0.8324  &   0.3435 &   0.3734 &  -0.3310\\ 
$2^{-6}$ &   0.2310 &   0.7570 &   1.5302  &  -0.6245  &   2.5573 &   0.2520 &   0.0335\\ 
$2^{-8}$ &  -0.2532 &   0.3617 &   0.8478  &  -0.0286  &   1.1685 &   1.2720 &   0.1293\\ 
$2^{-10}$ &  -0.0002 &   0.0086 &   0.4198  &   1.0663  &   1.6521 &   1.2344 &   0.8949\\ 
$2^{-12}$  &  -0.0208 &  -0.0051 &   0.1917  &   0.8547  &   1.4528 &   1.2501 &   1.0148\\ 
$2^{-14}$  &   0.1888 &  -0.0509 &   0.1504  &   0.5917  &   1.2201 &   1.3731 &   1.0606\\ 
$2^{-16}$  &   0.3670 &  -0.0685 &   0.0477  &   0.3999  &   1.0007 &   1.6196 &   1.1642\\ 
$2^{-18}$  &   0.3904 &  -0.0179 &  -0.0203  &   0.2101  &   0.7656 &   1.4258 &   1.3736\\ 
 $2^{-20}$ &   0.7398 &  -0.2868 &  -0.0467  &   0.0795  &   0.5156 &   1.2055 &   1.6702\\ \hline 
$\bar p^N_{R^{20}_\ve}$&   0.3593 &   0.0521 &  -0.0351  &   0.1095  &   0.5156 &   1.2055  &   1.6702\\ 
\hline
\end{tabular}
\caption{Computed double-mesh  orders of global convergence $\bar p^{N}_{\epsilon}$ for the corrected approximations $\bar U_2$ (\ref{corrected-approx}) measured over the entire domain $\Omega$}
\label{Tab4}
\normalsize
\end{table}

\begin{table}[ht]
\centering\small
\begin{tabular}{|r|rrrrrrr|}\hline 
$\varepsilon| N$&$N=8$& 16&32&64&128&256&512\\
\hline 
$1$ &   0.1689 &   0.0828 &   0.0686  &   0.0435  &   0.0245 &   0.0127 &   0.0057\\
 $2^{-2}$ &   0.2735 &   0.1434 &   0.0765  &   0.0530  &   0.0311 &   0.0197 &   0.0100 \\
 $2^{-4}$ &   0.4079 &   0.2272 &   0.1415  &   0.0887  &   0.0515 &   0.0305 &   0.0169 \\
 $2^{-6}$&   0.5720 &   0.4206 &   0.2369  &   0.1186  &   0.1109 &   0.0330 &   0.0188 \\
 $2^{-8}$ &   0.6790 &   0.5657 &   0.4065  &   0.2034  &   0.1852 &   0.0650 &   0.0123 \\
 $2^{-10}$&   0.7331 &   0.6325 &   0.4882  &   0.3030  &   0.1344 &   0.0518 &   0.0198 \\
$2^{-12}$ &   0.7769 &   0.6916 &   0.5624  &   0.3859  &   0.1971 &   0.0698 &   0.0326 \\
$2^{-14}$ &   0.8140 &   0.7414 &   0.6287  &   0.4662  &   0.2718 &   0.1100 &   0.0446\\
$2^{-16}$ &   0.8458 &   0.7847 &   0.6871  &   0.5419  &   0.3462 &   0.1615 &   0.0547 \\
$2^{-18}$ &   0.8718 &   0.8213 &   0.7393  &   0.6122  &   0.4329 &   0.2280 &   0.0799 \\
$2^{-20}$ &   0.8950 &   0.8519 &   0.7830  &   0.6720  &   0.5095 &   0.3025 &   0.1213 \\
\hline 
\end{tabular}
\caption{Approximate global errors $\Vert \bar U^N_2 - \bar U^{2048}_2 \Vert _\Omega$ over the  domain $\Omega$}
\label{Tab5}
\normalsize
\end{table}
The computed orders of convergence in Table \ref{Tab4} suggest that this multi-stage numerical 
method is producing a converging sequence of numerical approximations to the analytical solution of the Hemker problem (\ref{cont-prob}) across the entire domain for singular perturbation values $\ve \in [2^{-20},1]$.

\section{Conclusions}

Based on parameter explicit pointwise bounds on how the continuous solutions decays away from the circle, a numerical method was constructed for the Hemker problem. There are no spurious oscillations present in the numerical solutions, as we use simple upwinding in all co-ordinate directions used. Several layer adapted Shishkin meshes are utilized and these grids are aligned both to the geometry of the domain and to the dominant direction of decay within the boundary/interior  layer functions. Numerical experiments indicate that the method is producing accurate approximations over an extensive range of the singular perturbation parameter.  Hence,  the method is stable for all values of the singular perturbation parameter and the numerical approximations are converging to the continuous solution for each value of the parameter; however,  this convergence is not uniform in the singular perturbation parameter.   
\vskip1.0cm


{\bf Appendix 1: Bounds on the continuous solution}

\begin{enumerate}

\item 

For  $0< \kappa \leq 1$, consider the  barrier function
\begin{equation}\label{barrier}
 B^-(x,y) := \Bigl\{ \begin{array}{cc} 
 e^{\frac{ \kappa \cos (\theta ) (r-1)}{\ve}} &,\ \cos (\theta )  \leq 0,\quad r \geq 1, \\
1&,\ x  \geq 0,\quad x^2+y^2 \geq 1
\end{array}.
\end{equation}
For the subregion where $x<0$, note the following expressions for the partial derivatives of this barrier function:
\begin{eqnarray*}
\tilde B^-_\theta &=& -\frac{ \kappa \sin (\theta ) }{\ve} (r-1)\tilde B^- ; \\
\tilde B^-_{\theta \theta} 
&=& -\bigl(  \frac{\kappa \cos (\theta) }{\ve} (r-1) - \frac{ \kappa ^2 \sin ^2 (\theta) }{\ve ^2} (r-1)^2 \bigr)\tilde B^- ; \\
\tilde B^-_r &=& \frac{\kappa \cos (\theta) }{\ve} \tilde B^-,   \qquad
\tilde B^-_{rr} = \frac{\kappa ^2 \cos ^2 (\theta) }{\ve ^2}\tilde  B^-.
\end{eqnarray*}
Combining these expressions, we can deduce  that  
\begin{eqnarray*}
\tilde L \tilde B^- \geq \frac{\kappa }{\ve} \sin ^2 (\theta) (1-\kappa +\frac{\kappa}{r})  (1-\frac{1}{r}) \tilde B^- \geq 0,\ \cos (\theta )  <0,\ r > 1;\\
{ \ \bigl[ \frac{\partial  B^-}{\partial n} \bigr] _{x=0} = B^-_x (0^-,y)  =  \frac{\kappa }{\ve}  (1-\frac{1}{\vert y \vert}) \geq 0;}\ 
B^-(x,y) = 1, \ x^2+y^2 =1;  \frac{\partial B^-}{\partial x}(R,y) = 0 .
\end{eqnarray*}
 Using Theorem \ref{compar3}  we establish the bound (\ref{left}). 

\item 
 Consider the following barrier function
\[
 B^+(x,y) := \Bigl\{ \begin{array}{cc} e^{C_1(1+x)} e^{-\frac{ (y-1)}{\sqrt{\ve}} },&  \ y \geq 1; \quad  C_1 \geq 2\\
e^{C_1(1+x)} &\ ,\quad y \leq 1
\end{array}.
\]
In the region where $y>1$,  note the following expressions for the partial derivatives of this function:
\[
 B^+_x = C_1B^+\quad \hbox{and} \quad  -\ve  B^+_{yy} = -  B^+
 \]
and so, for $\ve$ sufficiently small, 
\[
L  B^+ 
=  \bigl(C_1- \ve C_1^2 - 1 \bigr)B^+ \geq 0, \quad \forall  y >1. 
\]
  Note also that
\begin{eqnarray*}
B^+(x,y) \geq 1, \ \quad \hbox{if} \quad x^2+y^2=1; \
{ \ \bigl[ \frac{\partial  B^+}{\partial n} \bigr] _{y=1} = -B_y^+(x,1^+) >0} \quad \hbox{and} \quad B^+_x(R,y) \geq 0.
\end{eqnarray*}
 Using Theorem \ref{compar3} we  establish the bound (\ref{right}) for $y>0 $ and  using symmetry  we deal with the case of $y<0$. 

\item For $y >0$, consider the following  function, defined in a neighbourhood of the line $x=0$, 
\[
B(x,y) := \bigl(1+\frac{\alpha x}{\ve ^{1/3}} \bigr)(1-\alpha C)^{-1}e^{-\frac{\kappa (0.5x^2+y-1)}{\ve ^{2/3}}},\quad -C\ve ^{1/3} < x < C\ve ^{1/3}, \quad \alpha C < 1;
\]
where the possible ranges for the positive parameters $\alpha, \kappa, C$ will be specified below.
Note the following expressions for the partial derivatives of this function:
\begin{eqnarray*}
 B_x &=& \frac{(1-\alpha C)^{-1}}{\ve ^{1/3}}\bigl( \alpha - \frac{\kappa x}{\ve ^{{1/3}}} (1+\frac{\alpha x}{\ve ^{1/3}}) \bigr)e^{-\frac{ \kappa (0.5x^2+y-1)}{\ve ^{{2/3}}}}; \\
\ve  B_{yy} &=& \frac{\kappa ^2(1-\alpha C)^{-1}}{\ve ^{1/3}}\bigl(1+\frac{\alpha x}{\ve ^{1/3}}\bigr)e^{-\frac{ \kappa (0.5x^2+y-1)}{\ve ^{{2/3}}}}; \\
\tilde B_{xx} 
&=&  \frac{1}{\ve } O(\ve ^{1/3})e^{-\frac{ \kappa (0.5x^2+y-1)}{\ve ^{{2/3}}}},\quad -C\ve ^{1/3} < x < C\ve ^{1/3}.
\end{eqnarray*}
Let us introduce $\xi := x \ve ^{-1/3}$, then for $\xi \in C (-1,1)$, $0< \alpha C < 1$,
\begin{eqnarray*}
-\ve  B_{yy} + B_x &=&\frac{(1-\alpha C)^{-1}}{\ve ^{1/3}}\bigl(\alpha - \kappa \xi (1+\alpha \xi) - \kappa ^2 (1+\alpha \xi )\bigr)e^{-\frac{ \kappa (0.5x^2+y-1)}{\ve ^{{2/3}}}} \\
&\geq &\frac{1}{\ve ^{1/3}}\bigl(\frac{\alpha}{(1+\alpha C)}  - C\kappa - \kappa ^2 \bigr)e^{-\frac{ \kappa (0.5x^2+y-1)}{\ve ^{{2/3}}}}.
\end{eqnarray*}
In order that $ LB(x,y) \geq 0, x \in C\ve ^{1/3} (-1,1)$  (and recalling that $\alpha C < 1$) we  impose the following constraints:
\begin{eqnarray}\label{constraints}
 \kappa  \leq  \frac{C}{3}, \quad \alpha >\frac{8}{9} C^2, \quad  C^3 < \frac{9}{8}.
\end{eqnarray}
For simplicity, we  take the particular values, 
\[
C =1, \ \alpha =0.9  \quad \hbox{and} \quad  \kappa =\frac{1}{3} . 
\]
 From this,
\[
B_x (x,y) >0, \quad B_y (x,y) <0; \quad -\ve ^{1/3} \leq  x \leq  \ve ^{1/3}, \ y >0.
\]
By restricting the domain of the function $B(x,y)$ to the strip
\[
S:= \{ (x,y) | \vert x \vert < \ve ^{1/3}, -\mu  \ve ^{2/3} < 0.5x^2+y -1 < \mu  \ve ^{2/3}.\}
\]
then $e^{\kappa \mu }\geq B(x,y) \geq e^{-\kappa \mu },\ (x,y) \in \bar S$ and
\[
e^{\kappa \mu } B(x,y) \geq 1 \geq u(x,y) ,\qquad  (x,y) \in \bar S \setminus S.
\]
 Using Theorem \ref{compar1} we establish the bound (\ref{center}) for $y>0 $ and using symmetry we deal with the case of $y<0$.

\end{enumerate}

\end{document}